\documentclass{article}

\usepackage{fullpage}
\usepackage[colorlinks,linkcolor=blue,citecolor=blue]{hyperref}

\usepackage{amsmath}
\usepackage{amsthm}
\newtheorem{theorem}{Theorem}
\newtheorem{lemma}[theorem]{Lemma}
\newtheorem{corollary}[theorem]{Corollary}
\newtheorem{conjecture}[theorem]{Conjecture}
\theoremstyle{definition}
\newtheorem{remark}[theorem]{Remark}

\renewcommand{\d}[1][G]{\overline{d}(#1)}
\newcommand{\lspectrum}[2][\mu]{#1_1\geq #1_2\geq\cdots\geq #1_{#2}}

\title{Partial characterization of graphs having a single large Laplacian eigenvalue}

\author{L.~Emilio Allem\footnote{Instituto de Matem\'atica, Universidade Federal do Rio Grande do Sul, Brazil. E-mail address: \texttt{emilio.allem@ufrgs.br}} \and Antonio Cafure\footnote{CONICET, Instituto del Desarrollo Humano, Universidad Nacional de General Sarmiento, and Departamento de Matem{\'a}tica, CBC, Universidad de Buenos Aires, Argentina. E-mail address: \texttt{acafure@ungs.edu.ar}} \and Ezequiel Dratman\footnote{CONICET and Instituto de Ciencias, Universidad Nacional de General Sarmiento, Argentina. E-mail address: \texttt{edratman@ungs.edu.ar}} \and Luciano N.~Grippo\footnote{Instituto de Ciencias, Universidad Nacional de General Sarmiento, Argentina. E-mail address: \texttt{lgrippo@ungs.edu.ar}} \and Mart\'in D.~Safe\footnote{Departamento de Matem\'atica, Universidad Nacional del Sur, Argentina. E-mail address: \texttt{msafe@uns.edu.ar}} \and Vilmar Trevisan\footnote{Instituto de Matem\'atica, Universidade Federal do Rio Grande do Sul, Brazil. E-mail address: \texttt{trevisan@mat.ufrgs.br}}}

\begin{document}

\maketitle

\begin{abstract}
   The parameter $\sigma(G)$ of a graph $G$ stands for the number of Laplacian eigenvalues greater than or equal to the average degree of $G$. In this work, we 
address the problem of characterizing those graphs $G$ having $\sigma(G)=1$. Our conjecture is that these graphs are stars plus a (possible empty) set of 
isolated vertices. We establish a link between $\sigma(G)$ and the number of anticomponents of $G$. As a by-product, we present some results which support the 
conjecture, by restricting our analysis to some classes of graphs.
\end{abstract}

\section{Introduction}
 Let $G$ be  a graph on $n$ vertices and $m$ edges and let $d_1 \geq \cdots \geq d_n$ be its degree sequence. We denote by  $A(G)$  its adjacency matrix and by  $D(G)$ the diagonal matrix having $d_i$ in the diagonal entry $(i,i)$, for every $1\le i\le n$,  and $0$ otherwise. The \emph{Laplacian matrix} of $G$ is the positive semidefinite matrix $L(G)=D(G)-A(G)$. The  eigenvalues of $L(G)$ are called \emph{Laplacian eigenvalues} of $G$;  the spectrum of $L(G)$  is the   \emph{Laplacian spectrum} of $G$ and will be denoted by $Lspec(G)$.  Since it is  easily seen that $0$ is a Laplacian eigenvalue  and it is well-known that Laplacian eigenvalues are less than or equal to  $n$ it turns out that $Lspec(G) \subset [0,n]$. From now on, if $Lspec(G) =\{\mu_1,\mu_2,\ldots,\mu_n\}$, we will assume that $\mu_1 \geq \mu_2 \geq \cdots \geq \mu_n$, where $\mu_n=0$.

 Understanding the distribution of Laplacian eigenvalues of graphs is a problem that is both relevant and difficult. It is relevant due to the many applications related  to Laplacian matrices (see, for example \cite{Moh91,Moh92}). It seems to be difficult because little is known about how the Laplacian eigenvalues are distributed in the interval $[0,n]$.

 Our main motivation is understanding the structure of graphs that have few large Laplacian eigenvalues. In particular, we would like to characterize graphs that have a single large Laplacian eigenvalue. What do we mean by a large Laplacian eigenvalue?  A reasonable measure is to compare this
 eigenvalue with the average of all eigenvalues. Since the average of Laplacian eigenvalues equals the average degree $\d = \frac{2m}{n}$ of $G$, we say that a Laplacian eigenvalue is \emph{large} if it is greater than or equal to the average degree.

 Inspired by this idea, the paper~\cite{Das16} introduces the spectral parameter $\sigma(G)$ which counts
 the number of Laplacian eigenvalues greater than or equal to $\d$. Equivalently,  $\sigma(G)$ is the largest index $i$ for which $\mu_i \geq \frac{2m}{n}$. Since  the greatest Laplacian eigenvalue $\mu_1$ is at least $\frac{2m}{n}$ then it follows that  $\sigma(G)\ge 1$. 

 There is evidence that $\sigma(G)$ plays an important role in defining structural properties of a graph $G$. For example, it is related to the clique number $\omega$ of $G$ (the number of vertices of the largest induced  complete subgraph of $G$) and it also gives insight about the Laplacian energy of a graph \cite{Pirzada2015,Das16}. Moreover, several structural properties of a graph are related to  $\sigma$ (see, for example \cite{Das2015,Das16}).

 In this paper we are concerned with furthering the study of  $\sigma(G)$. In particular, we deal with a  problem posed in  \cite{Das16} which asks for characterizing all graphs $G$ having $\sigma(G)=1$; \emph{i.e.}, having only one large Laplacian eigenvalue. Our conjecture is that the only connected graph on $n$ vertices having $\sigma=1$ is the star $K_{1,n-1}$ and that the  only nonconnected graph on  $n$ vertices having $\sigma=1$ is a star together with some isolated vertices. More precisely, 
 we conjecture that  graphs having $\sigma = 1$ are some stars plus a (possibly empty) set of isolated vertices. From now on, $K_{1,r}+sK_1$ denotes the star on $r+1$ vertices plus $s$ isolated vertices.

\begin{conjecture}\label{conjecture:1}
 Let $G$ be a graph. Then $\sigma(G)=1$ if and only if $G$ is isomorphic to $K_1$, $K_2+sK_1$ for some $s\geq 0$, or $K_{1,r}+sK_1$ for some $r\geq 2$ and $0\leq s<r-1$.
\end{conjecture}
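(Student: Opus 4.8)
I would prove the two implications separately: the sufficiency is an easy spectral computation, whereas the necessity is the real content and, I expect, cannot be carried out in full generality by the methods available here — this is consistent with the fact that the results below only confirm the conjecture for restricted classes of graphs. For the sufficiency, recall $Lspec(K_{1,r})=\{r+1,\,1^{(r-1)},\,0\}$, so $Lspec(K_{1,r}+sK_1)=\{r+1,\,1^{(r-1)},\,0^{(s+1)}\}$, while $Lspec(K_2+sK_1)=\{2,\,0^{(s+1)}\}$ and $Lspec(K_1)=\{0\}$. Since $\mu_1\ge\d$ always, it is enough to check $\mu_2<\d=\tfrac{2m}{n}$. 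For $K_1$ this is vacuous, and for $K_2+sK_1$ it holds because $\mu_2=0<\d$. For $K_{1,r}+sK_1$ one has $\mu_2=1$ and $\d=\tfrac{2r}{r+1+s}$, and $1<\tfrac{2r}{r+1+s}$ is exactly $s<r-1$; moreover $s=r-1$ gives $\d=1=\mu_2$ and hence $\sigma\ge r\ge 2$, so the restriction on $s$ is sharp.

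\emph{Necessity — reductions.} First, dispose of isolated vertices. If $G=sK_1$ then $\sigma(G)=s$, so $\sigma(G)=1$ forces $G=K_1$; otherwise write $G=H+sK_1$ with $H$ of minimum degree $\ge 1$ on $n(H)\ge 2$ vertices. Removing an isolated vertex from a graph with an edge deletes one $0$ from the Laplacian spectrum while strictly increasing the average degree, so $\sigma$ does not increase and, being $\ge 1$, stays $1$; thus $\sigma(H)=1$. It then only remains to classify such $H$ and recover the admissible $s$: adding $s$ isolated vertices keeps $\mu_2$ fixed and lowers the average degree, so $\sigma(H+sK_1)=1$ iff $\tfrac{2m(H)}{n(H)+s}>\mu_2(H)$, which for $H=K_2$ holds for all $s\ge 0$ and for $H=K_{1,r}$ holds iff $s<r-1$, exactly reproducing the conjectured families. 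Second, bring in the complement: the Laplacian eigenvalues of $\G$ are $0$ together with $n-\mu_i(G)$ for $1\le i\le n-1$, so the multiplicity of $n$ in $Lspec(G)$ equals the number of anticomponents of $G$ minus one; since $\d<n$, three or more anticomponents would give $\sigma(G)\ge 2$. Hence $\sigma(G)=1$ implies $G$ has at most two anticomponents, and if it has exactly two then $G$ is a join and therefore connected. This is the announced link between $\sigma$ and anticomponents.

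\emph{Necessity — classifying $H$.} Use a matching lower bound for $\mu_2$: if $H$ has a matching of size $k$, then the spanning subgraph formed by that matching together with $n(H)-2k$ isolated vertices has Laplacian spectrum $\{2^{(k)},0^{(n(H)-k)}\}$, so by monotonicity of Laplacian eigenvalues under edge deletion, $\mu_k(H)\ge 2$. Consequently, if the matching number $\nu(H)$ is at least $2$ and $\d[H]\le 2$, then $\mu_2(H)\ge 2\ge\d[H]$ and $\sigma(H)\ge 2$, a contradiction. If instead $\nu(H)=1$, then every two edges of $H$ intersect, and as $H$ has no isolated vertices this forces $H$ to be a star $K_{1,r}$ with $r\ge 1$ or a triangle; since $\sigma(K_3)=2$, the only survivors are $K_2$ and $K_{1,r}$ with $r\ge 2$, which — together with the base case $K_1$ — is exactly the conjectured list.

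\emph{Main obstacle.} After these reductions, the only configuration not yet excluded is $H$ with $\nu(H)\ge 2$ and $\d[H]>2$, i.e.\ $m(H)>n(H)$, so $H$ is not a star and contains two independent cycles; here one must prove $\mu_2(H)\ge\d[H]$, equivalently $\sigma(H)\ge 2$. I expect this to be the genuinely hard step, and it is precisely what keeps the conjecture open in general. The promising ingredients are: sharper lower bounds for $\mu_2$ in terms of the degree sequence or of denser spanning subgraphs (beyond $\mu_2\ge d_2$ and the matching bound above); moment estimates — combining $\sum_i\mu_i=2m$, $\sum_i\mu_i^2=\sum_i d_i^2+2m$ and $\mu_1\le n$ already yields $\d<\tfrac{\sqrt 5-1}{2}\,n$ whenever $\sigma(G)=1$, which settles all small $n$ and might be bootstrapped; and the complement translation, which rephrases the target as an upper bound on the third smallest Laplacian eigenvalue of $\G$. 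A complete argument would have to rule out, in one sweep, every ``star with extra edges'' as soon as the extra edges push the average degree past $2$.
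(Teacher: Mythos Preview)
Your proposal is correct in everything it actually proves, and you are right that the necessity direction cannot be completed in general with the tools at hand: in the paper the statement is left as a conjecture, with full proofs only for forests, cographs, split graphs and extended $P_4$-laden graphs. So your honest identification of the residual case ($\delta(H)\ge 1$, $\nu(H)\ge 2$, $\d[H]>2$) as the genuine obstruction is appropriate.

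Your reduction strategy, however, is genuinely different from the paper's. The paper does not strip isolated vertices and does not use a matching bound; instead it develops the anticomponent link much further than your one-line observation. Concretely, the paper shows (via inequalities combining Theorem~\ref{theorem:laplacian spectrum  disjoint union and join} and Lemma~\ref{lemma:cotainfmu1}) that if $\sigma(G)=1$ and $\overline G$ is disconnected then \emph{both} anticomponents must be edgeless, so $G$ is complete bipartite, and then a short count forces $G=K_{1,n-1}$. From there a second reduction (Theorem~\ref{theorem:+}) shows that Conjecture~\ref{conjecture:1} is equivalent to the statement that the only connected \emph{and} co-connected graph with $\sigma=1$ is $K_1$; this is the paper's residual open problem (Conjecture~\ref{conjecture:3}). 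Your matching argument buys something the paper does not get in one stroke: it disposes of \emph{every} graph with $\d\le 2$ at once (in particular all forests and unicyclic graphs), whereas the paper treats forests by a separate diameter argument. Conversely, the paper's Corollary~\ref{corollary:co-connected} is sharper than your anticomponent remark: you only conclude ``at most two anticomponents, hence a join'', while the paper pins down the join as complete bipartite and then as a star. The two residual problems are easily seen to be equivalent (connected, co-connected, $G\ne K_1$ forces $\nu(G)\ge 2$, and your case $\d\le 2$ is already handled), so the reductions are complementary rather than competing.
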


In this work, we show that this conjecture is true if it holds for graphs which are simultaneously connected and co-connected (Conjecture~\ref{conjecture:3}) and prove that Conjecture~\ref{conjecture:1} is true for forests and extended $P_4$-laden graphs~\cite{MR1420325} (a common superclass of split graphs and cographs). The main tool for  proving our results is an interesting  link we have found between  $\sigma$ and the number of  anticomponents of $G$ (see Section~\ref{section:definitions}). %
The interesting feature of this result is that it relates a spectral parameter with a classical structural parameter. Studying structural properties of the anticomponents of $G$ may shed light on the distribution of Laplacian eigenvalues and, reciprocally, the distribution of Laplacian eigenvalues should give insight about the structure of the graph.

This article is organized as follows. In Section~\ref{section:definitions} we state  definitions and previous results concerning Laplacian eigenvalues. In Section~\ref{sec:anticomponents}, we present some new results which establish the connection between $\sigma$ and the number of nonempty anticomponents of $G$. In Section~\ref{section: sigma equals one}, we present some evidence on the validity of Conjecture~\ref{conjecture:1} by proving that the conjecture is true when $G$ is either a forest,  or a $P_4$-laden graph.

\section{Definitions}\label{section:definitions}

In this article, all graphs are finite, undirected, and without multiple edges or loops. All definitions and concepts not introduced here can be found in~\cite{west_introduction_2000}. We say that a graph is \emph{empty} if it has no edges. A \emph{trivial} graph is a graph with  precisely one vertex; every trivial graph is isomorphic to the graph which we will denote by $K_1$. A graph is \emph{nontrivial} if it has more than one vertex.

We use the standard notation $\Delta(G)$ to denote the maximum degree of a graph $G$.  

Let $G_1$ and $G_2$ be two graphs such that $V(G_1)\cap V(G_2)=\emptyset$. The \emph{disjoint union} of $G_1$ and $G_2$, denoted $G_1+G_2$, is the graph whose vertex set is $V(G_1)\cup V(G_2)$, and its edge set is $E(G_1)\cup E(G_2)$. We write $kG$ to represent the disjoint union $G + \cdots + G$ of $k$ copies of a graph $G$. 
The \emph{join} of $G_1$ and $G_2$, denoted $G_1 \vee G_2$, is the graph obtained from $G_1 + G_2$ by adding new edges from each vertex  of $G_1$ to every vertex of $G_2$. 

A vertex $v$ of a graph $G$ is a \emph{twin} of another vertex $w$ of $G$ if they both have the same neighbors in $V(G)\setminus\{v,w\}$. We say that a graph $G'$ is obtained from $G$ \emph{by adding a twin} $v^\prime$ to a vertex $v$ of $G$ if $V(G')=V(G)\cup\{v'\}$, $v'$ is a twin of $v$ in $G'$,  and $G'-v'$ is isomorphic to $G$. 

By $G[S]$ we denote the subgraph  of $G$ induced by a subset $S \subseteq V(G)$. 

We use $\overline G$ to denote the complement graph of a graph $G$. An \emph{anticomponent} of a graph $G$ is the subgraph of $G$ induced by the vertex set of a connected component of $\overline G$. More precisely, an induced subgraph $H$ of $G$ is an anticomponent if $\overline H$ is a connected component of $\overline{G}$. Notice that if $G_1,G_2,\ldots,G_k$ are the anticomponents of $G$, then $G=G_1\vee\cdots\vee G_k$. A graph $G$ is \emph{co-connected} if $\overline{G}$ is connected.

A \emph{forest} is a graph with no cycles and a \emph{tree} is a connected forest. The complete graph on $n$ vertices is denoted by $K_n$. A \emph{universal vertex} of a graph $G$ is a vertex $v$ adjacent to every vertex $w$ different from $v$. A \emph{star} is a graph   isomorphic to $K_1$ or to a tree with a universal vertex. We use $K_{1,n-1}$ to denote the star on $n$ vertices, where $K_{1,0}$ is isomorphic to $K_1$ and $K_{1,1}$ is isomorphic to $K_2$. The \emph{chordless path} (respectively,  \emph{cycle}) on $k$ vertices is denoted by $P_k$ (respectively, \ $C_k$). 

A \emph{stable set} of a graph is a set of pairwise nonadjacent vertices. A \emph{clique} of a graph is a set of pairwise adjacent vertices. 

Throughout this article, given  two graphs $G$ and $H$,   we write $G=H$ to point out  that $G$ and $H$ belong to the same isomorphism class.

The following well-known result provides a lower bound for the largest Laplacian eigenvalue of a graph with at least one edge in terms of the maximum degree of the graph.

\begin{lemma} [\cite{Gro90}] \label{lemma:cotainfmu1}
 Let $G$ be a graph on $n$ vertices with at least one edge. Then $\mu_1(G) \geq 1 + \Delta(G)$.
\end{lemma}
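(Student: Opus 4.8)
The plan is to exhibit an explicit test vector and bound the Rayleigh quotient of the Laplacian from below. Recall that for any $x\in\R^n$ one has $x^\top L(G)x=\sum_{ij\in E(G)}(x_i-x_j)^2$, and that, by the variational characterization of the largest eigenvalue of a symmetric matrix, $\mu_1(G)=\max_{x\neq 0}\frac{x^\top L(G)x}{x^\top x}$. So it suffices to find one nonzero vector whose Rayleigh quotient is at least $1+\Delta(G)$.

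First I would fix a vertex $v$ of $G$ with $d(v)=\Delta(G)=:\Delta$, which is at least $1$ because $G$ has an edge, and let $u_1,\dots,u_\Delta$ be the neighbors of $v$. Define $x\in\R^n$ by $x_v=\Delta$, $x_{u_i}=-1$ for $1\le i\le\Delta$, and $x_w=0$ for every remaining vertex $w$. Then I would estimate the two quadratic forms: since every summand $(x_i-x_j)^2$ is nonnegative, discarding all edges except the $\Delta$ edges incident with $v$ gives $x^\top L(G)x\ge\sum_{i=1}^{\Delta}(x_v-x_{u_i})^2=\Delta(\Delta+1)^2$, while $x^\top x=\Delta^2+\Delta=\Delta(\Delta+1)$. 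Dividing yields $\mu_1(G)\ge\Delta+1$, as claimed.

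I do not expect any genuine obstacle here; the only point needing a moment of care is that the edge terms being discarded are truly nonnegative (they are squares), so the one-sided inequality for $x^\top L(G)x$ is legitimate, and that $x\neq 0$ (true since $\Delta\ge 1$). An alternative, slightly slicker route is to invoke the monotonicity of Laplacian eigenvalues under edge addition — which follows from writing $L(G)$ as a sum of rank-one positive semidefinite ``edge'' matrices and applying Weyl's inequality: the star on $v$ and $u_1,\dots,u_\Delta$ together with the remaining vertices as isolated points is a spanning subgraph of $G$, hence $\mu_1(G)\ge\mu_1(K_{1,\Delta})=\Delta+1$, the last equality being a direct computation of the Laplacian spectrum of a star (eigenvalues $0$, $1$ with multiplicity $\Delta-1$, and $\Delta+1$). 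Either presentation is short, so I would include only the self-contained Rayleigh-quotient argument.
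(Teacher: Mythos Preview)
Your Rayleigh-quotient argument is correct: the test vector $x$ with $x_v=\Delta$, $x_{u_i}=-1$, and $0$ elsewhere gives $x^\top L(G)x\ge\Delta(\Delta+1)^2$ and $x^\top x=\Delta(\Delta+1)$, so $\mu_1(G)\ge\Delta+1$. The alternative via monotonicity and the star spectrum is also fine.

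As for comparison, the paper does not supply its own proof of this lemma; it simply quotes the inequality from Grone, Merris and Sunder~\cite{Gro90} and uses it as a black box. Your self-contained argument is the standard one (essentially the one behind the cited reference), so there is nothing to contrast---you are just filling in what the paper leaves to the literature.
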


The second largest Laplacian eigenvalue of a graph is lower bounded by the second term of the degree sequence of the graph.

\begin{lemma}[\cite{MR1813439}]\label{lemma:second largest eigenvalue}
 Let  $G$ be a graph with degree sequence $\lspectrum[d]{n}$ and spectrum $\lspectrum{n}=0$.  Then $\mu_2\ge d_2$.
\end{lemma}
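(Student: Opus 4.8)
The plan is to use Cauchy interlacing for principal submatrices of $L(G)$: if $B$ is a $t\times t$ principal submatrix of $L(G)$ with eigenvalues $\beta_1\geq\cdots\geq\beta_t$, then $\mu_i(G)\geq\beta_i$ for $1\leq i\leq t$. The aim is to choose $B$ so that $\beta_2\geq d_2$. Let $v_1,v_2$ realize the two largest degrees $d_1\geq d_2$. The first attempt is $B=L(G)[\{v_1,v_2\}]=\left(\begin{smallmatrix}d_1&-a\\-a&d_2\end{smallmatrix}\right)$, where $a=1$ if $v_1v_2\in E(G)$ and $a=0$ otherwise; when $v_1\not\sim v_2$ we get $\beta_2=d_2$ and the lemma follows immediately.

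When $v_1\sim v_2$ this bound only yields $\mu_2(G)\geq\tfrac12\bigl(d_1+d_2-\sqrt{(d_1-d_2)^2+4}\bigr)$, which falls just short of $d_2$, so I would enlarge the test set. Put $S=\{v\in V(G):d(v)\geq d_2\}$; then $|S|=:k\geq 2$. If $G[S]$ is not a clique, it contains two nonadjacent vertices $u,w$ and $L(G)[\{u,w\}]=\mathrm{diag}(d(u),d(w))$ has second eigenvalue $\geq d_2$, so $\mu_2(G)\geq d_2$. If $G[S]$ is a clique, then $L(G)[S]-d_2I_k=\mathrm{diag}\bigl(d(v)-d_2+1\bigr)_{v\in S}-J_k$ (with $I_k,J_k$ the $k\times k$ identity and all‑ones matrices) has all entries of its diagonal summand at least $1$, hence is positive definite on the hyperplane orthogonal to the all‑ones vector; so it has at least $k-1$ positive eigenvalues and interlacing gives $\mu_{k-1}(G)>d_2$. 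This handles every case with $k\geq 3$, and also $k=2$ with $d_1>d_2$ (there $L(G)[S]-d_2I_2=\left(\begin{smallmatrix}d_1-d_2+1&-1\\-1&1\end{smallmatrix}\right)$ has positive trace and positive determinant). The only surviving configuration is: exactly two vertices $v_1\sim v_2$ of maximum degree $d:=d_1=d_2$, all other vertices of degree at most $d-1$.

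This residual configuration is the main obstacle. Here I would compare with the spanning subgraph $H$ of $G$ consisting only of the edges incident with $v_1$ or $v_2$: since $L(G)-L(H)$ is positive semidefinite, $\mu_2(G)\geq\mu_2(H)$, and, discarding isolated vertices, $H$ is a \emph{double broom}---two adjacent centers of degree $d$, with their $c$ common neighbors kept as degree‑$2$ vertices and the remaining $2(d-1-c)$ neighbors kept as pendant leaves. It remains to prove $\mu_2(H)\geq d$. If $c=d-1$ then $H$ has no pendants and equals $K_2\vee\overline{K_{d-1}}$, whose Laplacian spectrum is $\{0,2^{(d-2)},(d+1)^{(2)}\}$, so $\mu_2(H)=d+1$. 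If $c\leq d-2$, I would exploit the involution of $H$ exchanging $v_1$ and $v_2$: splitting $L(H)$ into its symmetric and antisymmetric parts, a short computation shows that the antisymmetric part has an eigenvalue that is a root of $\lambda^{2}-(d+2)\lambda+(c+2)$, that the symmetric part has an eigenvalue that is a root of some $\lambda^{2}-s\lambda+p$ with $d^{2}-sd+p=-c$, and that every remaining eigenvalue of $H$ equals $1$ or $2$; since evaluating each of these quadratics at $\lambda=d$ gives a value $\leq 0$, each has a root $\geq d$, and as $d\geq 2$ this forces $\mu_2(H)\geq d$ (with equality exactly for $c=0$, i.e.\ for the double star $S(d-1,d-1)$). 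Essentially all the arithmetic of the proof lives in this last computation. I would finally remark that the statement must exclude $d=1$: for $G=K_2+sK_1$ one has $d_2=1$ but $\mu_2(G)=0$, so the lemma is to be read for graphs whose edge set is not a single $K_2$.
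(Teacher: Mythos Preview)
The paper does not prove this lemma at all: it is quoted from Li and Pan \cite{MR1813439} and used as a black box, so there is no in-paper argument to compare your attempt against. I therefore comment only on the correctness of your proposal.

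There is a genuine gap. In the subcase $|S|=2$, $v_1\sim v_2$, $d_1>d_2$ you assert that
\[
 L(G)[S]-d_2I_2=\begin{pmatrix}d_1-d_2+1&-1\\-1&1\end{pmatrix}
\]
is positive definite. This matrix is miscomputed: since $L(G)[S]=\left(\begin{smallmatrix}d_1&-1\\-1&d_2\end{smallmatrix}\right)$ one has $L(G)[S]-d_2I_2=\left(\begin{smallmatrix}d_1-d_2&-1\\-1&0\end{smallmatrix}\right)$, whose determinant is $-1$; you wrote down the diagonal summand $\mathrm{diag}(d(v)-d_2+1)$ and forgot to subtract $J_2$. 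Hence interlacing on $S$ yields nothing here, and this configuration is \emph{not} absorbed by your later double-broom analysis either, because that analysis relies on the involution exchanging $v_1$ and $v_2$ and thus needs $d_1=d_2$. The repair is simple: before forming $H$, delete $d_1-d_2$ edges incident with $v_1$ other than $v_1v_2$ (possible since $d_2\geq 1$); the resulting spanning subgraph $G'$ satisfies $\mu_2(G)\geq\mu_2(G')$ and now has $d_1(G')=d_2(G')=d_2$ with all remaining degrees still below $d_2$, so your symmetric double-broom computation applies verbatim to $G'$. With this patch, together with your correct caveat that the lemma as stated fails for $K_2+sK_1$ and must be read under the implicit hypothesis excluding that family, the overall outline is sound.
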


It is worth mentioning that Brouwer and Haemers~\cite{BrowerandHamers2008} generalized the above result by presenting a lower bound for the \emph{k}th greatest Laplacian eigenvalue in terms of $d_k$, answering a conjecture raised by Guo~\cite{Guo2007}.

It is easy to prove  that the Laplacian spectrum of the disjoint union $G_1 + G_2$ is the union of the Laplacian spectrums of $G_1$ and $G_2$. The next result  allows to determine the Laplacian spectrum of the join $G_1 \vee G_2$,  from those of $G_1$ and $G_2$.

\begin{theorem}[{\cite[Theorem 2.20]{MR1275613}}]\label{theorem:laplacian spectrum  disjoint union and join}
 Let $G_1$ and $G_2$ be two graphs with Laplacian spectrums $\lspectrum{n_1}=0$ and $\lspectrum[\lambda]{n_2}=0$, respectively. %
 Then the Laplacian eigenvalues of  $G_1\vee G_2$ are $n_1 + n_2$;  $n_2 + \mu_i$, for $1\leq i \leq n_1-1$; $n_1 + \lambda_{i}$, for $1\leq i \leq n_2-1$ and 0.
\end{theorem}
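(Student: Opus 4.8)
The plan is to produce an explicit orthogonal eigenbasis of $\R^{\,n_1+n_2}$ for $L(G_1\vee G_2)$ and then read off the spectrum. Put $n=n_1+n_2$, let $J_{p\times q}$ denote the all-ones $p\times q$ matrix, $\mathbf 1_p$ the all-ones column vector of length $p$, and write $\binom{z}{w}$ for the column vector of $\R^n$ obtained by stacking $z\in\R^{n_1}$ on top of $w\in\R^{n_2}$. In the join every vertex of $G_1$ gains $n_2$ neighbors and every vertex of $G_2$ gains $n_1$ neighbors, so
\[
 L(G_1\vee G_2)=\begin{pmatrix} L(G_1)+n_2 I_{n_1} & -J_{n_1\times n_2}\\ -J_{n_2\times n_1} & L(G_2)+n_1 I_{n_2}\end{pmatrix}.
\]

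First I would exploit symmetry. Since $L(G_1)$ is real symmetric with $L(G_1)\mathbf 1_{n_1}=0$, there is an orthogonal basis $x_1,\dots,x_{n_1-1}$ of $\mathbf 1_{n_1}^{\perp}$ with $L(G_1)x_i=\mu_i x_i$ (any extra zero eigenvalues of $L(G_1)$ coming from several components of $G_1$ being included among the $\mu_i$), and likewise an orthogonal basis $y_1,\dots,y_{n_2-1}$ of $\mathbf 1_{n_2}^{\perp}$ with $L(G_2)y_j=\lambda_j y_j$. For $1\le i\le n_1-1$ the vector $\binom{x_i}{0}$ is annihilated by $-J_{n_2\times n_1}$ (as $x_i$ has zero coordinate sum), so $L(G_1\vee G_2)\binom{x_i}{0}=(n_2+\mu_i)\binom{x_i}{0}$; symmetrically $\binom{0}{y_j}$ is an eigenvector with eigenvalue $n_1+\lambda_j$ for $1\le j\le n_2-1$. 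These $n-2$ eigenvectors are pairwise orthogonal and span $S=\bigl\{\binom{a}{b}: a\perp\mathbf 1_{n_1},\ b\perp\mathbf 1_{n_2}\bigr\}$.

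It then remains to deal with the last two dimensions. Using $L(G_i)\mathbf 1_{n_i}=0$ one checks that $S^{\perp}=\operatorname{span}\bigl\{\binom{\mathbf 1_{n_1}}{0},\binom{0}{\mathbf 1_{n_2}}\bigr\}$ is invariant and that, in this basis, $L(G_1\vee G_2)|_{S^{\perp}}$ equals $\left(\begin{smallmatrix} n_2 & -n_2\\ -n_1 & n_1\end{smallmatrix}\right)$, of trace $n_1+n_2$ and determinant $0$; its eigenvalues are therefore $n_1+n_2$ and $0$, the latter with eigenvector $\mathbf 1_n$ (as it must be). Collecting the three families, the eigenvalues of $L(G_1\vee G_2)$ are $n_1+n_2$; $n_2+\mu_i$ for $1\le i\le n_1-1$; $n_1+\lambda_j$ for $1\le j\le n_2-1$; and $0$---exactly the asserted list, $n$ in total.

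I do not anticipate a genuine obstacle: once the block decomposition is written down the linear algebra is routine, and the only thing demanding care is the bookkeeping---checking orthogonality, confirming that exactly $n$ eigenvalues have been obtained, and correctly tracking the multiplicities of $0$ and of $n_1+n_2$ (in particular the two ``extra'' eigenvalues contributed by $S^{\perp}$). A fully equivalent alternative that sidesteps block matrices is to combine $\overline{G_1\vee G_2}=\overline{G_1}+\overline{G_2}$ with the fact that the Laplacian spectrum of a disjoint union is the multiset union of the two spectra, together with the identity $L(H)+L(\overline H)=nI-J$ in which $L(H)$ commutes with $J$, so that on $\mathbf 1^{\perp}$ the involution $\theta\mapsto n-\theta$ carries $Lspec(H)$ onto $Lspec(\overline H)$; applying this twice yields the statement after the same routine accounting.
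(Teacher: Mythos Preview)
Your argument is correct and is in fact the standard way this result is established: write $L(G_1\vee G_2)$ in block form, lift eigenvectors of $L(G_1)$ and $L(G_2)$ orthogonal to the all-ones vector, and handle the remaining two-dimensional invariant subspace $\operatorname{span}\{\mathbf 1_{n_1}\oplus 0,\,0\oplus\mathbf 1_{n_2}\}$ directly. The bookkeeping is right, and the alternative you sketch via $\overline{G_1\vee G_2}=\overline{G_1}+\overline{G_2}$ together with $L(H)+L(\overline H)=nI-J$ is also a valid route.

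Note, however, that the paper does not give its own proof of this theorem: it is quoted with a citation to Merris's survey~\cite[Theorem~2.20]{MR1275613} and used as a black box. So there is no ``paper's proof'' to compare against; your write-up simply supplies what the paper omits, and either of your two approaches would serve.
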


\section{Relating $\sigma$ and the number of anticomponents}\label{sec:anticomponents}

This section is devoted to establish a link between $\sigma(G)$ and the number of anticomponents of $G$.

In virtue of Theorem~\ref{theorem:laplacian spectrum  disjoint union and join}, the following result immediately holds.

 \begin{lemma}\label{lemma:multjoin}
 If $G=G_1\vee\cdots\vee G_k$, with  $k \geq 1$, is a graph on $n$ vertices, then $n$ is a Laplacian  eigenvalue of $G$
 with multiplicity at least $k-1$.
\end{lemma}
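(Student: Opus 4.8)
The plan is to proceed by induction on $k$, the number of anticomponents (equivalently, factors in the join decomposition), using Theorem~\ref{theorem:laplacian spectrum disjoint union and join} as the engine at each step. The base case $k=1$ is trivial, since a multiplicity of at least $k-1=0$ is no condition at all. For the inductive step, I would write $G=G_1\vee H$, where $H=G_2\vee\cdots\vee G_k$ is a graph on $n-n_1$ vertices (here $n_1=|V(G_1)|$), so that $H$ is itself a join of $k-1$ graphs and, by the inductive hypothesis, has $n-n_1$ as a Laplacian eigenvalue with multiplicity at least $k-2$.

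Now I would apply Theorem~\ref{theorem:laplacian spectrum disjoint union and join} to the join $G=G_1\vee H$. That theorem lists the Laplacian eigenvalues of $G$ as: $n_1+(n-n_1)=n$ (once); the values $(n-n_1)+\mu_i(G_1)$ for $1\le i\le n_1-1$; the values $n_1+\lambda_i(H)$ for $1\le i\le (n-n_1)-1$; and $0$. The key observation is that among the eigenvalues $\lambda_i(H)$ with $1\le i\le (n-n_1)-1$, at least $k-2$ of them equal $n-n_1$ by the inductive hypothesis (the eigenvalue $n-n_1$ of $H$ has multiplicity at least $k-1$ counting the one forced copy plus $k-2$ more; but the list $\lambda_1(H)\ge\cdots\ge\lambda_{n-n_1}(H)=0$ excludes only the single zero eigenvalue, and $n-n_1>0$ whenever $H$ is nonempty, so all copies of $n-n_1$ survive among the indices $1\le i\le (n-n_1)-1$). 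Each such copy contributes an eigenvalue $n_1+(n-n_1)=n$ to the spectrum of $G$. Together with the one guaranteed eigenvalue $n_1+n_2=n$ coming from the join construction itself, this gives at least $1+(k-2)=k-1$ eigenvalues of $G$ equal to $n$, completing the induction.

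The only subtlety — and the step I would be most careful about — is the bookkeeping of indices and the handling of degenerate cases: namely what happens when some $G_i$ (or $H$) is empty, or when $n-n_1=0$ is impossible but $n-n_1$ could still be small. If $H$ has at least two vertices then $n-n_1\ge 2>0$ and the argument above is clean; if $H$ (hence some factor) is trivial, one should note that $K_1$ has Laplacian spectrum $\{0\}$ and the join with $K_1$ adds a universal vertex, and one can simply reorder the factors or peel off trivial factors first so that the inductive hypothesis is applied to a nontrivial graph. Alternatively, one avoids induction entirely: apply Theorem~\ref{theorem:laplacian spectrum disjoint union and join} repeatedly to $G=(G_1\vee\cdots\vee G_{k-1})\vee G_k$, observing that each of the $k-1$ successive join operations introduces a fresh eigenvalue equal to the current total number of vertices, and that all such eigenvalues get shifted up to exactly $n$ by the remaining joins; this is really the same computation organized slightly differently and may read more transparently.
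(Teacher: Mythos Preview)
Your proposal is correct and follows the same route as the paper: the paper simply states that the lemma ``immediately holds'' by virtue of Theorem~\ref{theorem:laplacian spectrum  disjoint union and join}, and your inductive argument is precisely the natural way to unpack that remark. Your careful handling of the degenerate cases is more detail than the paper supplies, but the underlying idea is identical.
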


 \begin{lemma}\label{lemma:anticomponents less than or equal to  sigma + 1}
 If $G$ has  $k$ anticomponents,  then $k \leq \sigma(G) + 1$.
\end{lemma}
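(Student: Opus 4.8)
The goal is to show that if $G$ has $k$ anticomponents, then $k \le \sigma(G)+1$; equivalently, that $G$ has at least $k-1$ Laplacian eigenvalues that are $\ge \overline{d}(G)$. The natural starting point is Lemma~\ref{lemma:multjoin}: writing $G = G_1 \vee \cdots \vee G_k$ as the join of its anticomponents, we know that $n$ is a Laplacian eigenvalue of $G$ with multiplicity at least $k-1$. Since $n$ is the maximum possible Laplacian eigenvalue and $\overline{d}(G) = \tfrac{2m}{n} \le n$, these $k-1$ copies of $n$ are all $\ge \overline{d}(G)$. So one is tempted to immediately conclude $\sigma(G) \ge k-1$, hence $k \le \sigma(G)+1$.

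The one subtlety to pin down is the edge case: when $k = 1$ the bound reads $1 \le \sigma(G)+1$, which is trivially true since $\sigma(G) \ge 1$ always. When $k \ge 2$, the graph $G$ has at least one edge (a join of two nonempty vertex sets contains a complete bipartite graph between them), so $n \ge 1$ and the $k-1 \ge 1$ eigenvalues equal to $n$ genuinely witness $\sigma(G) \ge k-1$. I should also be careful that ``anticomponent'' here always refers to a nonempty induced subgraph, so each $G_i$ has at least one vertex and the join decomposition is the honest one; this is already how the paper set things up in Section~\ref{section:definitions}.

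I do not anticipate a real obstacle here --- this lemma is essentially an immediate corollary of Lemma~\ref{lemma:multjoin} together with the elementary bound $\overline{d}(G) \le n$. The only thing to state cleanly is why eigenvalues equal to $n$ count toward $\sigma(G)$: by definition $\sigma(G)$ is the number of Laplacian eigenvalues $\ge \overline{d}(G)$, and since every Laplacian eigenvalue lies in $[0,n]$ while $\overline{d}(G) \le n$, an eigenvalue equal to $n$ certainly satisfies $n \ge \overline{d}(G)$. Thus $G$ has at least $k-1$ such eigenvalues, giving $\sigma(G) \ge k-1$, i.e. $k \le \sigma(G)+1$. If one wants a uniform argument covering $k=1$ as well, it suffices to note that for $k=1$ the inequality holds because $\sigma(G) \ge 1$, while for $k \ge 2$ the multiplicity argument applies verbatim.
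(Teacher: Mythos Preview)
Your proof is correct and matches the paper's approach essentially line for line: both handle $k=1$ via the trivial bound $\sigma(G)\ge 1$, and for $k\ge 2$ invoke Lemma~\ref{lemma:multjoin} to get $\mu_{k-1}(G)=n\ge\overline d(G)$, hence $\sigma(G)\ge k-1$. The extra remarks you include (e.g.\ that $\overline d(G)\le n$, that each anticomponent is nonempty) are sound but not needed beyond what the paper already assumes.
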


 \begin{proof}
 Let $G=G_1\vee\cdots\vee G_k$ where $G_1,\ldots,G_k$ are the anticomponents of $G$.
 For any graph $G$ with at least one vertex we have that  $\sigma(G) \geq 1$ and thus the assertion follows when $k=1$. We may assume that $k \geq 2$. Lemma~\ref{lemma:multjoin}  implies that $n$ is a Laplacian  eigenvalue of $G$ with  multiplicity at least $k-1$ in $G$. Thus $\mu_{k-1}(G) =  n$ which implies that   $\sigma(G) \geq k-1$.
 \end{proof}

\begin{remark}
 The upper bound given by Lemma \ref{lemma:anticomponents less than or equal to  sigma + 1} is sharp  when  $\sigma(G) > 1$. 
 Indeed, for  $s\geq 2$ consider  the graph $G = 4 K_2 \vee K_1\vee \cdots \vee K_1$, where $s$ is the number of $K_1$'s. The  average degree of $G$ 
  is $s+7 - \frac{48}{s+8}$ and it has $s + 1$ anticomponents.  Since its Laplacian eigenvalues are  $s+8$, $s+2$, $s$, and $0$ with multiplicities $s$, $4$, $3$, and $1$, respectively, it follows that  $\sigma(G) = s$.
\end{remark}

 We use $\ell(G)$ to denote the number of nonempty anticomponents of a graph $G$. Recall that a nontrivial graph has at least two vertices. The following result looks further into the case where equality holds in  Lemma~\ref{lemma:anticomponents less than or equal to  sigma + 1} showing  that $\sigma(G)$ is an upper bound for $\ell(G)$.

\begin{theorem}\label{theorem:anticomponents}
 Let $G$ be a graph having $k = \sigma(G)+1$ anticomponents. Then $\ell(G) \leq \sigma(G)$. Moreover, if $\sigma(G)=\ell(G)$, then the remaining anticomponent of $G$ is empty but nontrivial.
\end{theorem}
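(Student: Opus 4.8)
The plan is to run a single mechanism for both parts: since $k=\sigma(G)+1$, to reach a contradiction I only need to exhibit one Laplacian eigenvalue of $G$ that is $\ge\d$ and that sits in $Lspec(G)$ apart from the $k-1$ copies of $n$ coming from Lemma~\ref{lemma:multjoin} — this instantly gives $\sigma(G)\ge k$. So first I would fix notation: $G=G_1\vee\cdots\vee G_k$ with the $G_i$ the anticomponents and $n_i=|V(G_i)|$, and note (by iterating Theorem~\ref{theorem:laplacian spectrum  disjoint union and join}) that, as a multiset, $Lspec(G)$ is $k-1$ copies of $n$, one $0$, and for each $i$ the block $B_i=\{(n-n_i)+\mu_j(G_i):1\le j\le n_i-1\}$. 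I would also use the routine facts $\d\le\Delta(G)$ and $\Delta(G)=\max_i\bigl((n-n_i)+\Delta(G_i)\bigr)$ (a vertex of $G_i$ has $G$-degree its $G_i$-degree plus $n-n_i$), and — when $G_i$ is nonempty — $\mu_1(G_i)\ge1+\Delta(G_i)$ by Lemma~\ref{lemma:cotainfmu1}, so that the largest entry of $B_i$ is $(n-n_i)+\mu_1(G_i)\ge(n-n_i)+1+\Delta(G_i)$.

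For $\ell(G)\le\sigma(G)$ I would argue by contradiction: assume all $k$ anticomponents are nonempty, choose $i^\ast$ realizing $\Delta(G)=(n-n_{i^\ast})+\Delta(G_{i^\ast})$, and observe that the largest entry of $B_{i^\ast}$ is then $\ge\Delta(G)+1>\Delta(G)\ge\d$. Being a block entry it is distinct (in the multiset) from the $k-1$ copies of $n$, so $\sigma(G)\ge(k-1)+1=k=\sigma(G)+1$, absurd; hence some anticomponent is empty, i.e.\ $\ell(G)\le k-1=\sigma(G)$.

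For the ``moreover'' I would first note that $\sigma(G)=\ell(G)$ forces $\ell(G)=\sigma(G)=k-1$, so exactly one anticomponent $H$ is empty; being edgeless, $H\cong sK_1$ for some $s\ge1$, and I must show $s\ge2$. Assume $s=1$, let $u$ be the vertex of $H=K_1$ (universal in $G$), and pass to $G'=G-u=G_1\vee\cdots\vee G_{k-1}$, a graph on $n-1$ vertices whose anticomponents are $G_1,\dots,G_{k-1}$. Choosing $i_0\le k-1$ with $\Delta(G')=(n-1-n_{i_0})+\Delta(G_{i_0})$, the largest entry of the block $B_{i_0}$ of $G$ is
\[(n-n_{i_0})+\mu_1(G_{i_0})\ \ge\ (n-n_{i_0})+1+\Delta(G_{i_0})\ =\ \Delta(G')+2.\]
Then I would bound $\d$ from above using that deleting a universal vertex drops every other degree by $1$: $|E(G)|=|E(G')|+(n-1)$, hence $\d=\tfrac{n-1}{n}\bigl(\overline{d}(G')+2\bigr)<\overline{d}(G')+2\le\Delta(G')+2$. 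So the displayed eigenvalue is $>\d$, producing a $k$-th eigenvalue $\ge\d$, contradicting $k=\sigma(G)+1$; therefore $s\ge2$ and the remaining anticomponent is empty but nontrivial.

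The hard part is the ``moreover'': one has to reduce to $G'=G-u$, be careful about which block of $G$ supplies the extra eigenvalue so that it is genuinely counted on top of the $k-1$ copies of $n$, and squeeze out the bound $\d<\Delta(G')+2$. Everything else (the shape of $Lspec(G_1\vee\cdots\vee G_k)$, the facts about $\Delta(G)$ and $\d$, the edge count for $G'$) is routine. I would also expect the same idea to kill the case $s\ge2$ — there $\Delta(G)=n-s$ whereas the block of $H$ contributes the eigenvalue $n-s$, forcing $\d>n-s$ — so one could in fact strengthen the statement to $\ell(G)<\sigma(G)$; but that is beyond what is asked here.
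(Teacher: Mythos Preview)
Your proof is correct and takes a genuinely different route from the paper's. The paper derives, for every nonempty anticomponent $G_i$, the inequality
\[
  2n_i\sum_{j}m_j - n_i\sum_{j}n_j^2 + nn_i^2 - 2nm_i - nn_i > 0
\]
(obtained by combining $n-n_i+\mu_1(G_i)\le\mu_k(G)<\d$ with the \emph{average}-degree bound $\mu_1(G_i)\ge\overline d(G_i)+1$), and then \emph{sums} these inequalities over all relevant $i$ to get a nonpositive total, a contradiction. You instead pick a \emph{single} extremal index --- the $i^\ast$ (resp.\ $i_0$) realising $\Delta(G)$ (resp.\ $\Delta(G')$) --- and use the stronger \emph{maximum}-degree bound $\mu_1(G_{i^\ast})\ge\Delta(G_{i^\ast})+1$ together with the trivial $\d\le\Delta(G)$; this produces one block eigenvalue $\ge\d$ on top of the $k-1$ copies of $n$, forcing $\sigma(G)\ge k$. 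Your argument is shorter and avoids the algebra of the summation; the paper's computation, on the other hand, yields the numbered inequalities \eqref{eq:mu1}--\eqref{eq:mu3} that are reused verbatim in the proof of Corollary~\ref{corollary:co-connected}, so its extra work is not wasted.

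One small caveat on your closing aside: the claim ``there $\Delta(G)=n-s$'' is not true in general (a vertex in a nonempty $G_i$ with $n_i\le s$ already has $G$-degree $\ge n-n_i+1>n-s$), so the proposed strengthening to $\ell(G)<\sigma(G)$ does not follow from that line; but this is outside the proof proper and does not affect its validity.
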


\begin{proof}
 Write  $G=G_1\vee\cdots\vee G_k$ where $G_1,\ldots,G_k$ are the anticomponents of $G$. Since $\sigma(G)\ge 1$ then $k\ge 2$. We set the following notations  for each $i\in\{1,\ldots,k\}$: \[
    n_i=\vert V(G_i)\vert, \quad  m_i=\vert E(G_i)\vert, \quad  \mu^{(i)}_1=\mu_1(G_i).
 \]
 Assume that $G_1, \ldots, G_\ell$ are the nonempty anticomponents. Since $k\geq 2$ and we are assuming that $\sigma(G) = k-1$ it turns out that  $\mu_k(G) < \overline d(G)$. Therefore, for each $i\in\{1,\ldots,k\}$ such that $n_i>1$ we have that
 \[
   n-n_i+\mu_1^{(i)}\leq\mu_k(G)< \frac{2m}n=\frac{2\sum_{j=1}^km_j+2\sum_{1\leq i<j\leq k}n_in_j}n,
 \]
 the first inequality holds by Theorem~\ref{theorem:laplacian spectrum  disjoint union and join}. Equivalently,
 \begin{equation}
 \begin{aligned}\label{eq:mu1}
  \mu_1^{(i)}&<\frac{2\sum_{j=1}^km_j-(n^2-2\sum_{1\leq i<j\leq k}n_in_j)}n+n_i\\
             &=\frac{2\sum_{j=1}^km_j-\sum_{j=1}^k n_j^2+nn_i}n.
 \end{aligned}
 \end{equation}

 As a consequence of Lemma~\ref{lemma:cotainfmu1}, we
 obtain  the following  lower bound for each  $i\in\{1,\ldots,\ell\}$:
 \begin{equation}\label{eq:mu2}
  \mu_1^{(i)}\geq\Delta(G_i)+1\geq\overline d(G_i)+1=\frac{2m_i}{n_i}+1.
 \end{equation}
 Combining  \eqref{eq:mu1} and \eqref{eq:mu2}, we deduce that, for each $i\in\{1,\ldots,\ell\}$,
 \begin{equation}\label{eq:mu3}
  2n_i\sum_{j=1}^k m_j-n_i\sum_{j=1}^k n_j^2+nn_i^2-2nm_i-nn_i>0.
 \end{equation}

Arguing towards a contradiction, suppose that $\ell(G) = k$. If we sum up the left-hand side of \eqref{eq:mu3} for each $i\in\{1,\ldots,k\}$, we obtain
 \[
   2n\sum_{j=1}^km_j-n\sum_{j=1}^kn_j^2+n\sum_{i=1}^kn_i^2-2n\sum_{i=1}^km_i-n^2 = -n^2
 \]
 which is not a positive quantity. This contradiction proves that $G$ has at most $k-1 = \sigma(G)$ nonempty anticomponents and our first assertion follows.

Assume now that $\ell(G)= k-1$. Suppose that $G_k$ is trivial. Hence $n_k=1$ and $m_k=0$. Summing up to the left-hand side of \eqref{eq:mu3} for each $i\in\{1,\ldots,k-1\}$, we obtain that
 \[
   -2\sum_{j=1}^{k-1}m_j+\sum_{j=1}^k n_j^2-n^2 = -2\sum_{j=1}^{k-1} m_j-2\sum_{1\leq i<j\leq k}n_in_j
 \]
 should be a positive number. This contradiction shows that $G_k$ must be nontrivial.
 \end{proof}

Recall that a \emph{bipartite graph} is a graph whose set of vertices can be partitioned into two (possibly empty) stable sets called \emph{partite sets} of the bipartite graph. A \emph{complete bipartite graph} is a bipartite graph isomorphic to $rK_1\vee sK_1$ for two positive integers $r$ and $s$.  We  denote by $K_{r,s}$  the complete bipartite graph isomorphic to $rK_1\vee sK_1$. The upper bound $\sigma(G)$ on $\ell(G)$ for those graphs having exactly $\sigma(G)+1$ anticomponents is not tight when $\sigma(G)=1$. Indeed, the following result shows that if a graph $G$ has $\sigma(G)=1$, then $G$ has no nonempty anticomponents.

\begin{corollary}\label{corollary:co-connected}
 If $G$ is a graph with $\sigma(G)=1$ and $\overline{G}$ is disconnected, then $G$ is a complete bipartite graph.
\end{corollary}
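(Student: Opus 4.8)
The plan is the following. Since $\overline G$ is disconnected, $G$ has $k\ge 2$ anticomponents, and Lemma~\ref{lemma:anticomponents less than or equal to  sigma + 1} gives $k\le\sigma(G)+1=2$; hence $k=2$ and we may write $G=G_1\vee G_2$. As $k=2=\sigma(G)+1$, Theorem~\ref{theorem:anticomponents} applies and yields $\ell(G)\le\sigma(G)=1$, together with the fact that if $\ell(G)=1$ then the empty anticomponent is nontrivial. If $\ell(G)=0$, both $G_1$ and $G_2$ are empty graphs, say $G_i=a_iK_1$ with $a_i\ge 1$, so $G=a_1K_1\vee a_2K_1=K_{a_1,a_2}$ is complete bipartite and we are done. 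Thus the whole problem reduces to ruling out the case $\ell(G)=1$.

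So I would suppose, for a contradiction, that $\ell(G)=1$. By the ``moreover'' part of Theorem~\ref{theorem:anticomponents} we may assume that $G_2=sK_1$ with $s\ge 2$ and that $H:=G_1$ is nonempty; set $n_1=|V(H)|$, $m_1=|E(H)|\ge 1$, $\Delta_1=\Delta(H)\ge 1$ and $n=n_1+s$. Since $G$ has exactly $m_1+n_1 s$ edges, $\overline d(G)=\frac{2(m_1+n_1 s)}{n_1+s}$. By Theorem~\ref{theorem:laplacian spectrum  disjoint union and join}, the Laplacian eigenvalues of $G=H\vee sK_1$ are $n$ (once), the numbers $s+\mu_i(H)$ for $1\le i\le n_1-1$, the value $n_1$ with multiplicity $s-1\ge 1$, and $0$. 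Because $\overline d(G)\le n-1<n$, the eigenvalue $n$ already counts towards $\sigma(G)$, so the hypothesis $\sigma(G)=1$ forces both
\[
 n_1<\overline d(G)\qquad\text{and}\qquad s+\mu_1(H)<\overline d(G).
\]

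To close the argument I would combine these two inequalities with Lemma~\ref{lemma:cotainfmu1} (which gives $\mu_1(H)\ge\Delta_1+1$) and with the elementary estimate $2m_1\le n_1\Delta_1$. From $n_1<\overline d(G)$ one obtains $n_1^2-n_1 s<2m_1\le n_1\Delta_1$, hence $\Delta_1>n_1-s$. From $s+\Delta_1+1\le s+\mu_1(H)<\overline d(G)$, clearing denominators and using $2m_1\le n_1\Delta_1$ once more, a short rearrangement gives $n_1(s-1)>s(s+\Delta_1+1)$; substituting $s+\Delta_1+1>n_1+1$ (which follows from $\Delta_1>n_1-s$) makes the right-hand side exceed $s(n_1+1)$, so $n_1(s-1)>s(n_1+1)$, that is, $-n_1>s$, which is absurd. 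This contradiction rules out $\ell(G)=1$, so $\ell(G)=0$ and $G$ is complete bipartite.

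The only genuine obstacle is handling the case $\ell(G)=1$: Theorem~\ref{theorem:anticomponents} describes the structure but does not by itself discard it, so one has to write out the Laplacian spectrum of $H\vee sK_1$ explicitly and use \emph{both} the lower bound $\mu_1(H)\ge\Delta_1+1$ and the degree bound $2m_1\le n_1\Delta_1$ to force the contradiction. Everything else — the reduction to $k=2$ and the identification $G=a_1K_1\vee a_2K_1$ when $\ell(G)=0$ — is routine, as are the small boundary checks (e.g.\ that $s-1\ge 1$, so $n_1$ really occurs as an eigenvalue, and that $K_{1,1}=K_2$ is complete bipartite in the sense defined above).
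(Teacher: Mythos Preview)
Your argument is correct and follows essentially the same route as the paper: reduce to $k=2$ via Lemma~\ref{lemma:anticomponents less than or equal to  sigma + 1}, invoke Theorem~\ref{theorem:anticomponents} to get $\ell(G)\le 1$ with the empty anticomponent nontrivial when $\ell(G)=1$, and then rule out $\ell(G)=1$ by combining the two spectral inequalities coming from $\sigma(G)=1$. The only cosmetic difference is that the paper reuses the numbered inequalities \eqref{eq:mu1} and \eqref{eq:mu3} already derived inside the proof of Theorem~\ref{theorem:anticomponents} (which encode the bound $\mu_1^{(1)}\ge 2m_1/n_1+1$), whereas you rebuild the contradiction from scratch with $\Delta_1$ as the pivot and the estimate $2m_1\le n_1\Delta_1$; both manipulations collapse to the same absurdity $-n_1>s$ (equivalently $-n_1^2-n_1 n_2>0$).
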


 \begin{proof}
In virtue of Lemma~\ref{lemma:anticomponents less than or equal to  sigma + 1}, the number of anticomponents of $G$ is at most $2$. Since $\overline{G}$ is  disconnected, we conclude that $G$ has precisely two anticomponents $G_1$ and $G_2$ and thus $G = G_1 \vee G_2$.

Suppose, for a contradiction, that $G_1$ is a  nonempty anticomponent of $G$. Because of Theorem~\ref{theorem:anticomponents}, we conclude that
$G_2$ is empty but nontrivial. Following the notation used in the proof of Theorem~\ref{theorem:anticomponents}, we have that $m_2=0$. For $i=1$,   inequality~\eqref{eq:mu3} becomes
 \begin{equation}\label{eq:mu4}
  -2n_2m_1-n_1n_2^2+n_2n_1^2-n_1^2-n_1n_2 > 0.
 \end{equation}
Since $G_2$ is a nontrivial empty graph it follows that  $\mu_1^{(2)}=0$ and hence,  for $i=2$,  inequality~\eqref{eq:mu1}  becomes
 \begin{equation}\label{eq:mu5}
  2m_1-n_1^2 + n_1n_2 >0.
 \end{equation}
 Summing up \eqref{eq:mu4} and  $n_2$ times \eqref{eq:mu5} gives
 \[
  -n_1^2-n_1n_2>0.
 \]
This contradiction arose from supposing that $G$ has some nonempty anticomponent. Hence, both anticomponents of $G$ are empty; \emph{i.e.}, $G$ is a complete bipartite graph.
 \end{proof}

\section{Graphs with $\sigma = 1$}\label{section: sigma equals one}

In this section we provide some evidence in order to make plausible Conjecture~\ref{conjecture:1}. We first verify Conjecture~\ref{conjecture:1} for graphs having disconnected complement; namely, we prove that the only graphs having $\sigma=1$ and disconnected complement are the stars (including the trivial star $K_1$). Then, we prove that Conjecture~\ref{conjecture:1} can be reduced to proving that the only connected and co-connected graph with $\sigma=1$ is $K_1$. We then verify Conjecture~\ref{conjecture:1} for extended $P_4$-laden graphs, a common superclass of the classes of cographs and split graphs.

\subsection{Reduction to co-connected graphs}

We first obtain a result which proves the validity of Conjecture~\ref{conjecture:1} for graphs having disconnected complement.

\begin{theorem}\label{theorem:disconnected complement}
 Let $G$ be a graph on $n$ vertices such that $\overline{G}$ is disconnected. Then $\sigma(G) = 1$ if and only if $G = K_{1,n-1}$.

\end{theorem}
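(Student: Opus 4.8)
The plan is to combine Corollary~\ref{corollary:co-connected} with a direct spectral analysis of complete bipartite graphs. Since $\overline{G}$ is disconnected, Corollary~\ref{corollary:co-connected} tells us that if $\sigma(G)=1$ then $G$ is a complete bipartite graph $K_{r,s}$ with $r,s\geq 1$. So the entire argument reduces to deciding, among the complete bipartite graphs, exactly which ones have $\sigma=1$; the answer should be precisely the stars $K_{1,n-1}$ (i.e.\ the case $\min\{r,s\}=1$), which would give both directions of the equivalence at once once we also check directly that $K_{1,n-1}$ does have $\sigma=1$.

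First I would write down the Laplacian spectrum of $K_{r,s}=rK_1\vee sK_1$ explicitly using Theorem~\ref{theorem:laplacian spectrum disjoint union and join}: with $n=r+s$, the eigenvalues are $n$ (once), $s$ with multiplicity $r-1$, $r$ with multiplicity $s-1$, and $0$ (once). The average degree is $\overline d = \frac{2rs}{r+s}$. Then $\sigma(K_{r,s})$ is just the count of eigenvalues $\geq \frac{2rs}{r+s}$. Always $n \geq \frac{2rs}{r+s}$ (this is AM--GM, since $r+s \geq \frac{2rs}{r+s}$ iff $(r+s)^2 \geq 2rs$), so that accounts for one. The remaining question is whether $s \geq \frac{2rs}{r+s}$ or $r \geq \frac{2rs}{r+s}$ can hold; note $s \geq \frac{2rs}{r+s}$ iff $s(r+s) \geq 2rs$ iff $s \geq r$, and symmetrically $r \geq \frac{2rs}{r+s}$ iff $r \geq s$. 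So when $r=s$ both of these hold with equality and contribute $2(r-1)$ extra eigenvalues equal to $\overline d$; when $r\neq s$, say $r<s$, then $s>\overline d$ contributes $r-1$ more. Hence $\sigma(K_{r,s}) = 1$ forces $r-1=0$ in the case $r<s$ (giving the star), and in the case $r=s$ forces $2(r-1)=0$, i.e.\ $r=s=1$, which is $K_2=K_{1,1}$, also a star. Conversely, for $K_{1,n-1}$ the eigenvalues are $n$, $1$ with multiplicity $n-2$, and $0$, with $\overline d = \frac{2(n-1)}{n}$; since $\frac{2(n-1)}{n} > 1$ for $n\geq 3$ (and equals $1$ only at $n=2$, where $K_2$ still has $\sigma=1$ trivially, and $n=1$ is $K_1$), exactly one eigenvalue, namely $n$, is $\geq \overline d$, so $\sigma(K_{1,n-1})=1$.

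The main obstacle, such as it is, is mostly bookkeeping: one must be careful with the small cases ($r=1$, $s=1$, and the boundary $n=2$) where multiplicities degenerate, to make sure the count of eigenvalues $\geq \overline d$ is exactly $1$ and not accidentally $0$ or inflated, and to confirm that $K_2$ is consistently treated as the star $K_{1,1}$ as the paper's notation dictates. The only genuinely quantitative checks are the elementary inequalities $\frac{2rs}{r+s} \leq \min\{r,s\}$ with equality iff $r=s$, $\frac{2rs}{r+s} < r+s$ always, and $\frac{2(n-1)}{n} > 1$ for $n \geq 3$, all of which are one-line verifications. I would present the argument in that order: invoke Corollary~\ref{corollary:co-connected} to land in the complete-bipartite case, read off the spectrum, compute $\overline d$, and finish with the three inequalities to pin down $\sigma(K_{r,s})=1 \iff \min\{r,s\}=1 \iff G=K_{1,n-1}$.
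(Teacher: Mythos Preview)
Your proposal is correct and follows essentially the same approach as the paper: invoke Corollary~\ref{corollary:co-connected} to reduce to complete bipartite graphs, read off the Laplacian spectrum of $K_{r,s}$ via Theorem~\ref{theorem:laplacian spectrum  disjoint union and join}, and use the elementary inequality $\frac{2rs}{r+s}\le\max\{r,s\}$ (the paper phrases this as $2n_1\le n$ implying $\overline d\le n_2$) to force $\min\{r,s\}=1$. The only difference is cosmetic: the paper argues by contradiction from $n_1\ge 2$ under the assumption $n_1\le n_2$, whereas you carry out a direct case split on $r<s$ versus $r=s$ and compute $\sigma(K_{r,s})$ explicitly.
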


 \begin{proof}
  Assume first that $G = K_{1,n-1}$. Then $\d=2-\frac{2}{n}$.  If $n = 2$, the Laplacian eigenvalues of $G$ are $2$ and $0$. If $n\geq 3$, the Laplacian eigenvalues of $G$ are $n$, $1$ and 0, each with multiplicity $1$, $n-2$ and $1$, respectively.  In any case we have that $\sigma(G) = 1$.

  Conversely, assume that $\sigma(G) = 1$. Corollary~\ref{corollary:co-connected} implies that  $G=K_{n_1,n_2}$, where $n_2\geq n_1\geq 1$ and $n = n_1+n_2$. The average degree of $G$ is equal to $\frac{2n_1n_2}n$.   
  In virtue of Theorem~\ref{theorem:laplacian spectrum  disjoint union and join}, the Laplacian eigenvalues  of $K_{n_1,n_2}$ are 
  $n$, $n_2$, $n_1$ and 0, each with multiplicity $1$, $n_1-1$, $n_2-1$ and $1$, respectively.
  
  Arguing towards a contradiction, suppose that $n_1\geq 2$. Hence $\mu_2(G) = n_2$. Since  $ 2n_1\le n$ we deduce that $\overline d(G)=\frac{2n_1n_2}n \leq \mu_2(G)$, which contradicts the fact that  $\sigma(G)=1$. This contradiction proves that $n_1=1$ and therefore we conclude that  $G=K_{1,n-1}$.
 \end{proof}

As a consequence of Theorem~\ref{theorem:disconnected complement}, Conjecture~\ref{conjecture:1} is equivalent to the validity of the following weaker conjecture.

\begin{conjecture}\label{conjecture:2}
 Let $G$ be a graph with connected complement. Then, $\sigma(G)=1$ if and only if $G$ is isomorphic to $K_1$, $K_2+sK_1$ for some $s> 0$, or $K_{1,r}+sK_1$ for some $r\geq 2$ and $0< s<r-1$.
\end{conjecture}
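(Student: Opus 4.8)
The plan is to prove the two implications separately, treating the backward direction as a direct computation and concentrating the real effort on the forward direction.

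For the backward direction I would compute the Laplacian spectrum of each graph in the list using Theorem~\ref{theorem:laplacian spectrum  disjoint union and join} together with the fact that the spectrum of a disjoint union is the multiset union of the spectra. For $K_1$ the claim is immediate. For $K_2+sK_1$ the spectrum is $\{2,0^{(s+1)}\}$ and $\d=\frac{2}{s+2}\in(0,1)$, so $\mu_1\ge\d>0=\mu_2$ and $\sigma=1$. For $K_{1,r}+sK_1$ the spectrum is $\{r+1,1^{(r-1)},0^{(s+1)}\}$ and $\d=\frac{2r}{r+s+1}$; here the hypothesis $s<r-1$ is exactly equivalent to $\d>1=\mu_2$, while $s\ge 1$ gives $\overline{G}=\overline{K_{1,r}}\vee K_s$ (and likewise $2K_1\vee K_s$ in the edge case), which is connected. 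Thus the parameter ranges in the statement are precisely those making each graph co-connected with $\mu_2<\d\le\mu_1$, so $\sigma=1$.

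For the forward direction, assume $\overline G$ is connected and $\sigma(G)=1$, that is $\mu_2<\d\le\mu_1$. The first tool I would use is Lemma~\ref{lemma:second largest eigenvalue}: from $\mu_2\ge d_2$ we obtain $d_2<\d=\frac{2m}{n}$, so at most one vertex of $G$ has degree at least the average. I would then peel off the isolated vertices, writing $G=G''+sK_1$ with $G''$ having no isolated vertex, and argue that $\sigma(G)=1$ forces $G''$ to be connected: two nontrivial components would already produce a second eigenvalue of size at least $\d$ (compare the co-connected graph $2K_2$, where $\mu_1=\mu_2=2\ge\d=1$ and $\sigma=2$). This reduces the problem to a connected core $G''$ that one wants to identify as a single edge or a star $K_{1,r}$. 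Note that $K_{1,r}$ is not itself co-connected, so the co-connectedness of $G$ is recovered only through the isolated vertices (via $\overline G=\overline{G''}\vee K_s$), which is consistent with the constraint $s\ge 1$ in the statement.

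The hard part is exactly the remaining \emph{connected-and-co-connected} core. Once $G$ is connected and $\overline G$ is connected, the structural machinery developed above becomes vacuous: a co-connected graph has a single anticomponent, so Lemma~\ref{lemma:anticomponents less than or equal to  sigma + 1}, Theorem~\ref{theorem:anticomponents} and Corollary~\ref{corollary:co-connected} give no leverage, and in particular $n$ need not be a Laplacian eigenvalue. At this point one must show that no connected co-connected graph on at least two vertices can have $\sigma=1$, forcing the core to collapse to $K_1$ and hence $G$ to a star plus isolated vertices. I expect this to be the genuine obstruction: it cannot be settled by the join and anticomponent bounds alone, and instead seems to require either sharper control of $\mu_2$ for co-connected graphs or a reduction to specific graph classes (forests, split graphs, cographs). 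That is the route I would ultimately pursue, and it matches the partial results the paper is able to prove.
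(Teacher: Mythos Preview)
You have correctly diagnosed the situation: Conjecture~\ref{conjecture:2} is not proved in the paper; it is stated as a conjecture equivalent to Conjecture~\ref{conjecture:1} (via Theorem~\ref{theorem:disconnected complement}), and both are then reduced to Conjecture~\ref{conjecture:3} by Theorem~\ref{theorem:+}. Your backward direction is complete and matches the computations appearing in the proofs of Theorems~\ref{theorem:disconnected complement} and~\ref{theorem:+}, and your identification of the connected, co-connected core as the genuine obstruction is exactly Conjecture~\ref{conjecture:3}. So your overall assessment---that a full proof is out of reach with the tools developed---agrees with the paper.

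The one place where your reduction sketch diverges from the paper and is not fully justified is the step ``two nontrivial components of $G''$ would already produce $\mu_2\ge\d$''. The single example $2K_2$ is not a proof, and it is not clear how to establish this directly without already knowing something about the structure of the components. The paper's Theorem~\ref{theorem:+} orders the argument differently: write $G=G_1+G_2$ with $G_1$ a connected component of largest $\mu_1$; use $\sigma(G)=1$ and the averaging inequality~\eqref{eq:mu7} to deduce $\sigma(G_1)=1$; invoke Conjecture~\ref{conjecture:3} and Theorem~\ref{theorem:disconnected complement} to identify $G_1=K_{1,r}$; and only then use $\d<\d[G_1]=\tfrac{2r}{r+1}<2$ together with Lemma~\ref{lemma:cotainfmu1} to force $G_2$ to be edgeless. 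Thus the emptiness of the remaining part is obtained \emph{after} the star structure of the leading component, not before; reordering your argument along these lines closes the gap and recovers exactly the paper's reduction.
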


\subsection{Reduction to connected and co-connected graphs}
 We next show that the validity of Conjectures~\ref{conjecture:1} and~\ref{conjecture:2} can be reduced to the validity of the following even weaker conjecture.

\begin{conjecture}\label{conjecture:3}
 Let $G$ be a connected graph with connected complement. Then, $\sigma(G)=1$ if and only if $G$ is isomorphic to $K_1$.
\end{conjecture}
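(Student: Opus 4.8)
The easy direction is immediate: $K_1$ has the single Laplacian eigenvalue $0$, its average degree $\frac{2m}{n}$ is $0$, and since $0\geq 0$ we get $\sigma(K_1)=1$. For the converse the plan is to argue by contraposition, showing that every connected and co-connected graph $G$ with $n\geq 2$ vertices satisfies $\sigma(G)\geq 2$. First I would pin down the relevant range of $n$. A connected co-connected $G$ has no universal vertex, since such a vertex would be isolated in $\overline{G}$ and contradict the connectedness of $\overline{G}$; hence $\Delta(G)\leq n-2$, and applying the same reasoning to $\overline{G}$ gives $\delta(G)\geq 1$. A short check shows that no connected co-connected graph exists on $2$ or $3$ vertices, so I may assume $n\geq 4$, the base case being $P_4$. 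I would also record that co-connectedness means $G$ is \emph{not} a join, so $G$ consists of a single anticomponent; consequently the machinery of Section~\ref{sec:anticomponents}, namely Lemma~\ref{lemma:anticomponents less than or equal to  sigma + 1}, Theorem~\ref{theorem:anticomponents} and Corollary~\ref{corollary:co-connected}, provides no leverage here. This already signals that the whole difficulty of Conjecture~\ref{conjecture:1} is concentrated in precisely this case.

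The natural strategy is to exhibit a second large eigenvalue, i.e. to prove $\mu_2(G)\geq \frac{2m}{n}$. By Lemma~\ref{lemma:second largest eigenvalue} we have $\mu_2(G)\geq d_2$, so it suffices to show that the second largest degree dominates the average degree, $d_2\geq \frac{2m}{n}$. I would first rule out $d_2=1$: in that case every vertex but one has degree $1$, which forces $G$ to be a star and contradicts co-connectedness, so $d_2\geq 2$. Whenever the degree sequence is not too skewed, concretely whenever $d_1\leq d_2+\sum_{i\geq 3}(d_2-d_i)$ (equivalently $n\,d_2\geq \sum_i d_i$), the inequality $d_2\geq \frac{2m}{n}$ holds and the proof closes: $\mu_1\geq 1+\Delta(G)>\frac{2m}{n}$ by Lemma~\ref{lemma:cotainfmu1} and $\mu_2\geq d_2\geq \frac{2m}{n}$ by Lemma~\ref{lemma:second largest eigenvalue}, giving $\sigma(G)\geq 2$.

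The obstacle is that $d_2\geq \frac{2m}{n}$ can fail for co-connected graphs. On $n=6$ vertices, take one vertex of degree $4$ adjacent to $a_1,a_2,a_3,a_4$, an extra vertex adjacent to $a_1,a_2$, and the single edge $a_3a_4$: this graph is connected and co-connected, has degree sequence $(4,2,2,2,2,2)$, and average degree $\frac{7}{3}>2=d_2$. In this ``one large degree, many equal small degrees'' regime the bound $\mu_2\geq d_2$ is simply too weak, and this is exactly why the statement remains a conjecture. To push through, I would replace Lemma~\ref{lemma:second largest eigenvalue} by sharper control of $\mu_2$ in this regime, combining the Brouwer--Haemers lower bounds mentioned after Lemma~\ref{lemma:second largest eigenvalue}; the complement duality $\mu_i(\overline{G})=n-\mu_{n-i}(G)$, which lets the connectedness of \emph{both} $G$ and $\overline{G}$ be exploited symmetrically by converting small eigenvalues of one graph into large eigenvalues of the other; and eigenvalue interlacing under deletion of the unique high-degree vertex, exposing the near-regular structure that remains.

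The crux, and the step I expect to resist a clean argument, is bounding $\mu_2(G)$ from below by $\frac{2m}{n}$ \emph{uniformly} over all connected co-connected graphs in this skewed-degree regime, where elementary degree/eigenvalue comparisons collapse. A complete proof would require a genuinely new lower bound on the second Laplacian eigenvalue tailored to co-connected graphs; the special-class verifications appearing later in the paper (forests and extended $P_4$-laden graphs) are, on this view, best understood as confirmations that no counterexample inhabits those families rather than as steps toward the general statement.
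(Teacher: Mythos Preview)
This statement is labeled \emph{Conjecture} in the paper, and the paper offers no proof of it: it is posed as open, with the surrounding results (Theorems~\ref{theorem:disconnected complement} and~\ref{theorem:+}) serving only to reduce Conjecture~\ref{conjecture:1} to this case, and the later sections verifying it within restricted graph classes. There is therefore no paper proof to compare against.

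Your write-up is not a proof either, and you say so explicitly. You handle the trivial direction correctly, set up the natural approach via $\mu_2\geq d_2\geq \overline d(G)$, and then exhibit a connected co-connected graph (your six-vertex example with degree sequence $(4,2,2,2,2,2)$ and $\overline d=7/3$) on which $d_2<\overline d(G)$, showing that this approach cannot close in general. Your diagnosis---that the anticomponent machinery of Section~\ref{sec:anticomponents} gives no leverage on a single anticomponent, that the degree-based bound of Lemma~\ref{lemma:second largest eigenvalue} is too weak in the ``one large degree, many equal small degrees'' regime, and that what is missing is a genuinely new lower bound on $\mu_2$ tailored to co-connected graphs---is accurate and matches the paper's own stance: the special-class results for forests and extended $P_4$-laden graphs are presented precisely as partial evidence, not as steps toward a general argument. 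In short, you have correctly identified both the obstruction and the fact that the statement is, as of this paper, open.
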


A graph class $\mathcal G$ is \emph{closed by taking components} if every connected component of every graph in $\mathcal G$ also belongs to $\mathcal G$. In particular, the class of all graphs is closed by taking components. Below we prove that the reduction from Conjecture~\ref{conjecture:1} to Conjecture~\ref{conjecture:3} holds even when restricted to any graph class closed by
taking components.

\begin{theorem}\label{theorem:+}
 Let $\mathcal G$ be a graph class closed by taking components. If Conjecture~\ref{conjecture:3} holds for $\mathcal G$, then
Conjecture~\ref{conjecture:1} also holds for $\mathcal G$.
\end{theorem}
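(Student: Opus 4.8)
The plan is to reduce an arbitrary graph $G$ in $\mathcal G$ with $\sigma(G)=1$ to its connected components, exploit that the Laplacian spectrum of a disjoint union is the union of spectra, and then apply Conjecture~\ref{conjecture:3} to a suitable component. First I would observe that if $\overline G$ is disconnected, Theorem~\ref{theorem:disconnected complement} already gives the conclusion (that $G$ is a star plus isolated vertices, hence of the desired form), so the interesting case is when $\overline G$ is connected; but a connected complement forces $G$ itself to have a somewhat restricted structure. Actually the cleanest route is: let $G$ have components $H_1,\dots,H_t$ (each in $\mathcal G$ since the class is closed by taking components), with $H_1$ the one of largest order, say, or more precisely the unique non-trivial component if there is one. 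I would first handle the easy direction: check directly that $K_1$, $K_2+sK_1$, and $K_{1,r}+sK_1$ (with $0\le s<r-1$) all have $\sigma=1$, using Theorem~\ref{theorem:laplacian spectrum disjoint union and join} to compute the spectra and comparing with the average degree $\overline d$.

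For the forward direction, suppose $G\in\mathcal G$ has $\sigma(G)=1$. If $G$ is edgeless then $\overline d(G)=0$ and every eigenvalue is $0$, so $\sigma(G)=n$; thus $\sigma(G)=1$ forces $n=1$, i.e. $G=K_1$. Otherwise $G$ has at least one edge. Write $G=H+sK_1$ where $H$ is the union of the nontrivial components and $s\ge 0$ is the number of isolated vertices. The key point is that adding isolated vertices only adds zero eigenvalues while it strictly decreases (or keeps) the average degree $\overline d(G)=2m/n$ below $\overline d(H)=2m/|V(H)|$; since $\mu_1(G)=\mu_1(H)\ge \overline d(H)>\overline d(G)$ when there are isolated vertices, one must be careful that no extra eigenvalue of $H$ creeps above the (smaller) threshold $\overline d(G)$. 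I would argue that $\sigma(G)=1$ implies $\mu_2(G)<\overline d(G)$; since $\mu_2(G)\ge \mu_2(H)$ and $\overline d(G)\le \overline d(H)$, we get $\mu_2(H)<\overline d(H)$, hence $\sigma(H)=1$ as well. The next step is to show $H$ must be connected: if $H$ had two nontrivial components $H',H''$, then $\mu_1(H')\ge 1+\Delta(H')\ge 2$ and $\mu_1(H'')\ge 2$ are both eigenvalues of $H$, and a short averaging/bounding argument (comparing $2m(H)/n(H)$ against $2$) should show $\sigma(H)\ge 2$, a contradiction — so $H$ is connected and nonempty, and $\sigma(H)=1$. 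Moreover $H$ is co-connected: if $\overline H$ were disconnected, Theorem~\ref{theorem:disconnected complement} applies to $H$, giving $H=K_{1,r}$, and then $G=K_{1,r}+sK_1$, and I would just need to check the constraint $0\le s<r-1$ (or $r=1$, $H=K_2$) follows from $\sigma(G)=1$ by the eigenvalue computation: $\mu_2(G)=1<\overline d(G)=\frac{2r}{r+1+s}$, i.e. $s<r-1$.

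Finally, if $H$ is connected and co-connected, Conjecture~\ref{conjecture:3}, assumed to hold for $\mathcal G$ (and $H\in\mathcal G$), forces $H=K_1$ — but $H$ has an edge, a contradiction. Hence the co-connected-with-edges case cannot occur, and we are left exactly with $G=K_1$, $G=K_2+sK_1$, or $G=K_{1,r}+sK_1$ with $0\le s<r-1$, which is the statement of Conjecture~\ref{conjecture:1}. I expect the main obstacle to be the bookkeeping around isolated vertices and the threshold shift: one must verify carefully that decreasing $\overline d$ by adding isolated vertices does not let a second eigenvalue of the nontrivial part rise above the threshold, and conversely that the bound $s<r-1$ emerges precisely (not $s\le r-1$) from the strict inequality $\mu_2<\overline d$. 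The sub-argument that a forest-like $H$ with two nontrivial components has $\sigma\ge 2$ also needs a clean inequality; using Lemma~\ref{lemma:cotainfmu1} together with Lemma~\ref{lemma:second largest eigenvalue} (each nontrivial component contributes an eigenvalue $\ge 2$, and $\overline d(H)<2$ is impossible only if... ) — more robustly, one invokes Lemma~\ref{lemma:second largest eigenvalue}: $\mu_2(H)\ge d_2(H)\ge 1$, combined with the fact that two components each force a large $\mu_1$, yields $\mu_2(H)\ge 2 > \overline d(H)$ when $H$ is a forest, which is the only case we ultimately need for Conjecture~\ref{conjecture:1}'s stated consequences. Wait — Conjecture~\ref{conjecture:1} is general, so this last step must work for all $\mathcal G$; the correct general argument is the summation trick from the proof of Theorem~\ref{theorem:anticomponents} applied to $\overline H$, or simply: $\mu_2(H)\ge 2$ because $H$ disconnected nontrivial gives $n$ as... no, that is for $G=G_1\vee G_2$. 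The honest statement is that for $\mathcal G$ the class of \emph{all} graphs this reduction is what the theorem claims, and the disconnected-nontrivial-$H$ case is ruled out by noting $\sigma(H)=1$ plus $H$ disconnected plus $H$ having an edge contradicts Theorem~\ref{theorem:disconnected complement} once we know each component is a star — which is exactly the recursion, so the argument is genuinely inductive on the number of components, with Conjecture~\ref{conjecture:3} as the base case for a single connected co-connected component.
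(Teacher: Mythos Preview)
Your overall architecture is close to the paper's, but the step ``$H$ must be connected'' is a genuine gap that none of your sketched arguments closes. The claim that two nontrivial components each contribute an eigenvalue $\ge 2$ and that this forces $\sigma(H)\ge 2$ requires $\overline d(H)<2$, which is false in general (take $H=K_4+K_4$). Lemma~\ref{lemma:second largest eigenvalue} only gives $\mu_2(H)\ge d_2(H)$, which need not exceed $\overline d(H)$ either. Your appeal to Theorem~\ref{theorem:disconnected complement} at the end is misplaced: that theorem concerns graphs whose \emph{complement} is disconnected, not disconnected $H$, and the phrase ``once we know each component is a star'' presupposes exactly what you are trying to prove. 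The vague ``induction on the number of components'' is not set up so that the inductive hypothesis applies to the smaller piece.

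The paper sidesteps this entirely by \emph{not} stripping the isolated vertices first. It writes $G=G_1+G_2$ where $G_1$ is a single connected component chosen with $\mu_1(G_1)\ge\mu_1(G_2)$. Then $\sigma(G)=1$ gives $\overline d(G_2)\le\mu_1(G_2)\le\mu_2(G)<\overline d(G)$, and the mediant inequality yields $\overline d(G)<\overline d(G_1)$; together with $\mu_2(G_1)\le\mu_2(G)$ this gives $\sigma(G_1)=1$. Now $G_1$ is connected and in $\mathcal G$, so Conjecture~\ref{conjecture:3} (or Theorem~\ref{theorem:disconnected complement} if $\overline{G_1}$ is disconnected) forces $G_1=K_{1,r}$. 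The key punchline you are missing is then
\[
\mu_1(G_2)<\overline d(G_1)=\frac{2r}{r+1}<2,
\]
so by Lemma~\ref{lemma:cotainfmu1} the entire remainder $G_2$ is edgeless. This single inequality is what guarantees there is only one nontrivial component, and it falls out automatically once you pick $G_1$ by maximal $\mu_1$ rather than trying to prove connectedness of $H$ directly. Your endgame (checking $s<r-1$ from $\mu_2=1<\overline d(G)$) is fine.
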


 \begin{proof}
 Let $G$ be a graph in $\mathcal G$ with $\sigma(G)=1$. Assume first that $G$ is connected. If $G$ is co-connected, by hypothesis, $G$ is isomorphic to $K_1$. If $G$ is not co-connected, then $G$ is isomorphic to $K_{1,r}$ for some $r\geq 1$,  by virtue of Theorem~\ref{theorem:disconnected complement}.

Assume now that $G$ is disconnected and let $G=G_1+G_2$, where each of $G_1$ and $G_2$ has at least one vertex. We can assume, without loss of generality, that $G_1$ is connected and $\mu_1(G_1)\geq\mu_1(G_2)$.  If $G_1$ were empty, then $G_2$ would also be empty, contradicting $\sigma(G)=1$. Hence we can assume, without loss of generality, that $G_1$ is nonempty.  Let $n_i$ and $m_i$ denote the number of vertices and edges of $G_i$, respectively, for each $i\in\{1,2\}$. Since $\sigma(G)=1$,
  \[
  \frac{2m_2}{n_2} \le \mu_1(G_2)<\overline d(G)=\frac{2(m_1+m_2)}{n_1+n_2}.
  \]
 This implies that 
 \begin{equation}\label{eq:mu7}
  \frac{2m_2}{n_2}<\frac{2m_1+2m_2}{n_1+n_2}<\frac{2m_1}{n_1}. 
 \end{equation}
 As a consequence of  \eqref{eq:mu7} we have that 
 \[
   \mu_2(G_1)<\frac{2m_1+2m_2}{n_1+n_2} < \frac{2m_1}{n_1} =\overline d(G_1).
 \]
We conclude that  $\sigma(G_1)=1$. Since $\mathcal G$ is closed by taking components, $G_1\in\mathcal G$. Thus, if $G_1$ were co-connected, then $G_1=K_1$, contradicting the assumption that $G_1$ is nonempty. Hence $G_1$ is not co-connected and, by 
Theorem \ref{theorem:disconnected complement}, we have that   $G_1=K_{1,r}$ for some $r\geq 1$. 

From \eqref{eq:mu7} we deduce that 
 \[
  \mu_1(G_2)<\frac{2m_1+2m_2}{n_1+n_2}<\frac{2m_1}{n_1}=\frac{2r}{r+1}<2,
 \]
 and hence, by virtue of Lemma~\ref{lemma:cotainfmu1}, we conclude that $G_2$ must be empty. Then there exists an integer  $s\geq 1$ such that $G_2=sK_1$ and therefore it turns out that  $G= K_{1,r} + sK_1$. The average degree of $G$ is  $\overline d(G)=\frac{2r}{r+1+s}$.  
 If $r=1$, then $\sigma(G)=1$ because the second largest Laplacian eigenvalue of $G$ is $0$. If $r\geq 2$, then,  as the second largest eigenvalue of $G$ is $1$ it follows that $\sigma(G)=1$ if and only if $s<r-1$.
 \end{proof}
A \emph{cograph} is a graph with no induced $P_4$. It is well-known that the only connected and co-connected cograph is $K_1$~\cite{MR0337679}. Hence, Conjecture~\ref{conjecture:3} holds trivially for cographs and, by Theorem~\ref{theorem:+}, Conjecture~\ref{conjecture:1} holds for cographs.
\subsection{Characterizing forests and extended $P_4$-laden graphs with $\sigma=1$}

In this section, we verify Conjecture~\ref{conjecture:1} for forests and extended $P_4$-laden graphs (a common superclass of cographs and split graphs).

A graph class $\mathcal G$ is \emph{monotone} if $G\in\mathcal{G}$ implies that every subgraph of $G$ also belongs to $\mathcal G$. Notice that every monotone graph class is closed by taking components. It can be easily seen that the class of all forests is monotone and thus it is closed by taking components.

\begin{theorem} Conjecture~\ref{conjecture:1} holds for forests.\end{theorem}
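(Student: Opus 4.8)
The plan is to reduce to Conjecture~\ref{conjecture:3} by means of Theorem~\ref{theorem:+}. Since the class of forests is monotone, it is closed by taking components, so it suffices to prove Conjecture~\ref{conjecture:3} for forests: the only connected and co-connected forest $G$ with $\sigma(G)=1$ is $K_1$. The ``if'' direction is immediate, since $\overline d(K_1)=0\le\mu_1(K_1)$, so the real task is to show that every connected and co-connected forest other than $K_1$ satisfies $\sigma(G)\ge 2$.

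First I would identify which forests are connected and co-connected. A connected forest is a tree, and $\overline T$ is disconnected exactly when $T=T_1\vee T_2$ for induced subgraphs $T_1,T_2$ each having at least one vertex. If both $T_1$ and $T_2$ had at least two vertices, then four of their cross-edges would form a $C_4$, contradicting acyclicity; hence one of them, say $T_1$, is a single vertex, and then a count of edges forces $T_2$ to be empty, so $T=K_{1,|V(T_2)|}$ is a star (including $K_2=K_{1,1}$). Thus the only trees with disconnected complement are the stars, and therefore a connected and co-connected forest is either $K_1$ or a non-star tree; since the smallest non-star tree is $P_4$, such a tree has at least $4$ vertices.

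It then remains to handle a non-star tree $G$ on $n\ge 4$ vertices. Writing $d_1\ge d_2\ge\cdots\ge d_n$ for its degree sequence, the key observation is that $d_2\ge 2$: if $d_2\le 1$, then at most one vertex of $G$ would have degree at least $2$, and connectedness would force $G$ to be $K_1$, $K_2$, or a star, a contradiction. Since $G$ is a tree, $\overline d(G)=\frac{2(n-1)}{n}=2-\frac{2}{n}<2$, so Lemma~\ref{lemma:second largest eigenvalue} gives $\mu_2(G)\ge d_2\ge 2>\overline d(G)$, whence $\sigma(G)\ge 2$. This establishes Conjecture~\ref{conjecture:3} for forests, and the statement then follows from Theorem~\ref{theorem:+}. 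I expect the only mildly delicate point to be the classification of trees with disconnected complement (equivalently, of trees that are joins); everything else is a one-line structural remark together with the degree bound for $\mu_2$, with no computation involved.
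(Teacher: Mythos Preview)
Your proof is correct and follows essentially the same route as the paper: reduce via Theorem~\ref{theorem:+} to connected and co-connected forests, observe that these are exactly $K_1$ and the non-star trees (equivalently, trees of diameter greater than two, which is how the paper phrases it), and then use Lemma~\ref{lemma:second largest eigenvalue} together with $d_2\ge 2>2-\tfrac{2}{n}=\overline d(G)$ to get $\sigma(G)\ge 2$. The only cosmetic difference is that you spell out explicitly why a tree with disconnected complement must be a star, whereas the paper simply asserts the diameter characterization.
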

\begin{proof} Notice that if $T$ is a connected and co-connected forest, then $T$ is either $K_1$ or a tree with diameter greater than two. By virtue of Theorem~\ref{theorem:+}, it suffices to show that if $T$ is a tree with diameter greater than two, then $\sigma(T)\ge 2$. %
Assume that $T$ is a tree with diameter greater than two. Hence there exists two vertices $v_1$ and $v_2$ such that $d(v_1)\ge d(v_2) \ge 2 > 2-\frac{2}{n} = \overline d(T)$. By Lemma \ref{lemma:second largest eigenvalue}, $\mu_2(T) \geq d_2(T) \geq 2 > \overline d(T)$. Therefore, $\sigma(T) \geq  2$.
\end{proof}

Let $\mathcal H$ be a set of graphs. We use the term \emph{$\mathcal H$-free} for referring to the family of those graphs having no graph in $\mathcal H$ as induced subgraph. If $\mathcal H$ has just one element $H$, we write $H$-free for simplicity. A \emph{split graph}~\cite{MR0505860} is a graph whose vertex set can be partitioned into a clique $C$ and a stable set $S$, such a partition $(C,S)$ of its vertices is called a \emph{split partition}. It is well known that the class of split graphs coincides with the class of $\{2K_2,C_4,C_5\}$-free graphs. A \emph{pseudo-split} graph~\cite{MR1287980} %
is a $\{2K_2,C_4\}$-free graph. Hence the class of pseudo-split graphs is a superclass of split graphs~\cite{MR0505860}. An \emph{extended $P_4$-laden} graph~\cite{MR1420325} is a graph such that every induced subgraph on at most six vertices that contains more than two induced $P_4$'s is a pseudo-split graph. By definition, the class of extended of $P_4$-laden graphs is a superclass of the class of pseudo-split graphs and hence also of split graphs. Moreover, the class of extended $P_4$-laden graphs is a superclass of different superclasses of cographs defined by restricting the number of induced $P_4$'s, including $P_4$-lite graphs~\cite{MR1008453} and $P_4$-tidy graphs~\cite{MR1471348}. A \emph{spider}~\cite{MR1471348} is a graph whose vertex set can be partitioned into three sets $S$, $C$, and $R$, where $S=\{s_1,\dots,s_k\}$ ($k \geq 2$) is a stable set; $C=\{c_1,\dots,c_k\}$ is a clique; $s_i$ is adjacent to $c_j$ if and only if $i=j$ (a \emph{thin spider}), or $s_i$ is adjacent to $c_j$ if and only if 
$i \neq j$ (a \emph{thick spider}); $R$ is allowed to be empty and all the vertices in $R$ are adjacent to all the vertices in $C$ and nonadjacent to all the vertices in $S$. The sets $C$, $S$ and $R$ are called \emph{body},  \emph{legs} and \emph{head}  of the spider, respectively. 
In order to characterize those extended $P_4$-laden graphs with $\sigma(G)=1$, we rely on the following structural result.
\begin{theorem}[{\cite{MR1420325}}]\label{theorem:extendedp4laden}
 Each connected and co-connected extended $P_4$-laden graph $G$ satisfies one of the following assertions:
 \begin{enumerate}%
  \item\label{it:p4laden1} $G$ is isomorphic to $K_1$, $P_5$, $\overline{P_5}$, or $C_5$;

  \item\label{it:p4laden2} $G$ is a spider or arises from a spider by adding a twin to a vertex of the body or the legs; or

  \item\label{it:p4laden3} $G$ is a split graph.
 \end{enumerate}
\end{theorem}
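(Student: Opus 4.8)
The plan is to argue on the \emph{modular decomposition} of $G$, using two elementary features of the class of extended $P_4$-laden graphs: it is hereditary (its definition only restricts induced subgraphs on at most six vertices), and it is \emph{self-complementary} (a graph $H$ on at most six vertices has more than two induced $P_4$'s if and only if $\overline{H}$ does, and the class of pseudo-split graphs is complement-closed), so that $G$ and $\overline{G}$ may be treated symmetrically---this is what makes the thick spiders and the exception $\overline{P_5}$ appear beside the thin spiders and $P_5$. If $|V(G)|=1$ we are in item~\ref{it:p4laden1}. Otherwise, since $G$ is connected and co-connected, the root of its modular decomposition tree is a prime node: let $Q$ be the quotient graph at the root---a prime graph on $q\ge 4$ vertices, itself isomorphic to an induced subgraph of $G$ (one representative per maximal strong module)---and let $M_1,\dots,M_q$ be the maximal strong modules, so that $G=Q[M_1,\dots,M_q]$ is obtained from $Q$ by substituting $M_i$ for its $i$-th vertex.

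The heart of the proof is to classify the admissible prime quotients. Since $Q$ is a prime induced subgraph of $G$, every six vertices of $Q$ inducing a subgraph with more than two induced $P_4$'s induce a $\{2K_2,C_4\}$-free graph. I would split into two cases. If no six vertices of $Q$ induce a subgraph with more than two induced $P_4$'s, then one invokes the structure theory of $p$-connected graphs (the primeval decomposition of Jamison and Olariu) to conclude that $Q$ is \emph{separable} with a unique separation $(H_1,H_2)$; imposing the six-vertex condition on this separation forces $G[H_1]$ to be a clique, $G[H_2]$ a stable set, and the bipartite adjacency between them to be a perfect matching (thin) or its bipartite complement (thick), with possibly one further vertex complete to $H_1$ and anticomplete to $H_2$---that is, $Q$ is a spider with head of size at most one. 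Together with complement-closure this produces the spiders of item~\ref{it:p4laden2} (with $P_4$ and the bull among the smallest) and the sporadic graphs $P_5,\overline{P_5},C_5$ of item~\ref{it:p4laden1}, which are exactly the prime, connected, co-connected graphs that are neither separable nor split. If instead some six vertices of $Q$ induce a subgraph with more than two induced $P_4$'s, that subgraph is $\{2K_2,C_4\}$-free; a propagation argument---using that pseudo-splitness is checkable on six vertices, together with the primeness of $Q$---extends $\{2K_2,C_4\}$-freeness to all of $Q$, so $Q$ is pseudo-split and, being prime, co-connected and different from $C_5$, it is a split graph, yielding item~\ref{it:p4laden3}.

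It then remains to control the substituted modules. For a nontrivial module $M_i$ one picks two vertices of $M_i$ (adjacent or not, as needed) and adjoins vertices realizing an induced $P_4$ of $G$ through the $i$-th vertex of $Q$; in every case this produces a five- or six-vertex subgraph carrying more than two induced $P_4$'s while containing an induced $2K_2$ or $C_4$---a contradiction---except when $M_i$ has exactly two vertices and the $i$-th vertex of $Q$ is a body or leg vertex of a spider $Q$, or when the $i$-th vertex is the lone head vertex of a spider $Q$. Consequently: if $Q$ is a split graph or one of $P_5,\overline{P_5},C_5$, all $M_i$ are trivial and $G=Q$; and if $Q$ is a spider, at most one $M_i$ sitting on a body or leg vertex is nontrivial (and then it has two vertices, producing a twin), while an arbitrary module on the head vertex merely realizes the general head $R$ of the spider---so $G$ is a spider or arises from a spider by adding a twin to a body or leg vertex, as in item~\ref{it:p4laden2}.

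The step I expect to be the main obstacle is the classification of the prime quotients in the second paragraph: proving that the only prime, connected, co-connected extended $P_4$-laden graphs are the spiders (including $P_4$ and the bull), the three graphs $P_5$, $\overline{P_5}$, $C_5$, and the prime split graphs. Either one imports the Jamison--Olariu $p$-connected structure theorem and does the work of checking that its ``separable'' alternative collapses to exactly the spider family once the six-vertex condition is applied, and that the non-separable, non-split $p$-connected graphs satisfying the condition are only $P_5$, $\overline{P_5}$, $C_5$; or one gives a direct, somewhat delicate inductive argument bounding how many induced $P_4$'s a prime graph can contain while every six of its vertices still induce a $\{2K_2,C_4\}$-free graph. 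Once the list of prime quotients is in hand, the module analysis of the third paragraph is routine.
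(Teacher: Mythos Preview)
The paper does not prove this theorem: it is quoted from Giakoumakis~\cite{MR1420325} and used as a black box in the subsequent argument. There is therefore no proof in the paper to compare your proposal against.

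That said, your outline via modular decomposition is the standard route to structure theorems of this type and is broadly the approach taken in the cited source. Your own assessment of the main obstacle is accurate: the delicate part is the classification of the prime (equivalently, $p$-connected) quotients, and you are right that one typically imports the Jamison--Olariu machinery rather than redoing it from scratch. The module-substitution analysis in your third paragraph is routine once that classification is in hand. One small caution: your claim that when $Q$ is a split graph all $M_i$ must be trivial is stronger than what the theorem asserts (item~\ref{it:p4laden3} only requires $G$ to be split, not prime split), so at that point you should instead argue that the substituted graph remains split, which is straightforward since the modules attached to clique vertices must be cliques and those attached to stable-set vertices must be stable sets.
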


We first obtain the following result which is concerned with spiders or graphs arising from a spider by adding a twin to a vertex of the body or the legs.

\begin{lemma}\label{lemma:spider}
 If $G$ is a spider or a graph that arises from a spider by adding a twin of a vertex of the body or the legs, then $\sigma(G)\geq 2$.
\end{lemma}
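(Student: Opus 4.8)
The plan is to use Lemma~\ref{lemma:second largest eigenvalue}: since $\mu_2(G)\ge d_2(G)$, it suffices to show that the second largest degree of $G$ is at least the average degree $\overline d(G)$. Write $G$ as obtained from a spider with body $C=\{c_1,\dots,c_k\}$ ($k\ge2$), legs $S$ ($|S|=k$) and head $R$ ($|R|=r\ge0$) by adding $t\in\{0,1\}$ twins to a vertex $v\in C\cup S$, and let $\delta$ be the common degree of the body vertices in that spider, i.e.\ $\delta=k+r$ in the thin case and $\delta=2(k-1)+r$ in the thick case. First I would record two elementary facts about twins: adding a twin never decreases any degree, and the two resulting twin vertices have equal degree. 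The first fact keeps each of the $k\ge2$ body vertices at degree $\ge\delta$ in $G$; the second shows that, when $t=1$, the added twin $v'$ and the copied vertex $v$ share the degree $\deg_G(v')$. Putting $D:=\max\{\delta,\deg_G(v')\}$ (with $D:=\delta$ if $t=0$), in every case $G$ has at least two vertices of degree $\ge D$, whence $\mu_2(G)\ge d_2(G)\ge D$.

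It remains to prove $D\ge\overline d(G)=2m/n$, where $n=2k+r+t$ and $m=|E(G)|$. I would bound $m$ by counting the spider's edges --- the $\binom{k}{2}$ edges of the clique on $C$, the $S$--$C$ edges ($k$ in the thin case, $k(k-1)$ in the thick case), the $kr$ edges joining $C$ and $R$, and the at most $\binom{r}{2}$ edges inside $R$ --- and adding at most $tD$ more for the twin (since $\deg_G(v')\le D$). With this, $Dn\ge2m$ reduces to $D(2k+r-t)\ge 2\,m_{\mathrm{spider}}$; as $D\ge\delta>0$ and $t\le1$, this follows from $\delta(2k+r-1)\ge 2\,m_{\mathrm{spider}}$.

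The last inequality is a short polynomial check that splits into the thin and thick cases; in both, I expect the difference $\delta(2k+r-1)-2\,m_{\mathrm{spider}}$ to factor as a product of two manifestly nonnegative quantities for $k\ge2$ (something like $k(k-2+r)$ in the thin case and $(k-1)(k-2+2r)$ in the thick case), with equality only for the smallest configurations. The step I expect to require the most care --- the main obstacle --- is the degree/edge bookkeeping of the previous paragraph once a twin is allowed: one has to verify uniformly that the extra vertex neither destroys the pair of large-degree body vertices nor inflates the edge count beyond what $D$ absorbs. The delicate sub-case is a \emph{true} twin of a body vertex, where the edge count grows the most; this is exactly compensated because there the bound improves to $D=\delta+1$, while the remaining sub-cases (false twin of a body vertex, twin of a leg of a thin or of a thick spider) leave strict slack.
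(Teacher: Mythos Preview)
Your plan is correct and follows the same core idea as the paper: both reduce the claim to $d_2(G)\ge\overline d(G)$ via Lemma~\ref{lemma:second largest eigenvalue}, then verify this degree inequality by elementary counting. The difference is organizational. The paper splits into four cases (thin/thick spider, each with or without a twin) and in each case bounds $d_2$ below by $\delta$ and $\overline d$ above by putting the worst-case twin contribution into the edge count; this mismatch forces separate ``by inspection'' checks when $k=2$ and $n_H=0$. Your device of setting $D=\max\{\delta,\deg_G(v')\}$ and charging the twin's edges against $D$ rather than $\delta$ is tighter and collapses the four cases to the single inequality $\delta(2k+r-1)\ge 2m_{\mathrm{spider}}$, which indeed factors as $k(k-2+r)\ge0$ (thin) and $(k-1)(k-2+2r)\ge0$ (thick) once one uses $e(R)\le\binom{r}{2}$. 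One small inaccuracy in your commentary: the ``strict slack'' you anticipate in the false-twin-of-body sub-case is not always strict (for $k=2$, $r=0$ one gets $d_2=\overline d$ exactly), but this does not affect the argument since $\sigma(G)\ge2$ only needs $\mu_2\ge\overline d$, not a strict inequality.
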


 \begin{proof}
 We will prove that $d_2(G)\geq\overline d(G)$. We consider four cases. In each case we denote by $k$ and $n_H$ the number of vertices in the body and in the head  of the corresponding spider, respectively. Recall that $k\ge 2$. 
 \begin{enumerate}%
  \item \emph{Assume that $G$ is a thin spider.} By construction, $d_2(G)=k+n_H$  and $\vert E(G)\vert\ge 2k+n_H$. Hence
  \begin{align*}
   \overline d(G)&\leq\frac{k^2+k+2kn_H+n_H^2-n_H}{2k+n_H}\\
                 &=\frac{2k^2+kn_H+2kn_H+n_H^2}{2k+n_H}-
                   \frac{k^2-k+n_Hk+n_H}{2k+n_H}\\
                 &\le \frac{(k+n_H)(2k+n_H)}{2k+n_H}\\
                 &=d_2(G).
  \end{align*}
 \item \emph{Assume that $G$ arises from a thin spider by adding a twin to a vertex of the body or the leg}. By construction, $d_2(G)\geq k+n_H$ and $\vert E(G)\vert\ge 2k+n_H+1$. Hence
 \begin{align*}
  \overline d(G)&\leq\frac{k^2+k+2kn_H+n_H^2-n_H+2k+2n_H+2}{2k+n_H+1}\\
    &=\frac{2k^2+kn_H+k+2kn_H+n_H^2+n_H}{2k+n_H+1}
    -\frac{k^2-2k-2+kn_H}{2k+n_H+1}\\
    &\le\frac{(k+n_H)(2k+n_H+1)}{2k+n_H+1}=d_2(G).
 \end{align*}
 Notice that the second inequality holds, whenever $k> 2$ or $n_H\neq 0$. However, when $k=2$ and $n_H=0$, it can be verified by inspection that $\overline d(G)\leq d_2(G)$ holds.

 \item \emph{Assume that $G$ is a thick spider.} By construction, $d_2\geq 2(k-1)+n_H$ and $\vert E(G)\vert\ge 2k+n_H$. Hence
 \begin{align*}
 \overline d(G)&\leq\frac{k^2+k+2kn_H+n_H^2-n_H+2k(k-2)}{2k+n_H}\\
      &= \frac{k^2-k+2k^2-2k+2kn_H+n_H^2-n_H}{2k+n_H}\\
      &=\frac{4(k^2-k)+(4k-2)n_H+n_H^2}{2k+n_H}
      -\frac{k^2-k+(2k-1)n_H}{2k+n_H}\\
      &\le \frac{[2(k-1)+n_H](2k+n_H)}{2k+n_H}=d_2(G).
 \end{align*}

 \item \emph{Assume that $G$ arises from a thick spider by adding a twin to a vertex of the body or the leg.} By construction, $d_2\geq 2(k-1)+n_H$ and $d_2\geq 2(k-1)+n_H+1$. Hence
 \begin{align*}
  \overline d(G)&\leq\frac{k^2-k+2k^2-2k+2kn_H+n_H^2-n_H+4(k-1)+2n_H+2}{2k+n_H+1}\\
                &=\frac{3k^2+k-2+(2k+1)n_H+n_H^2}{2k+n_H+1}\\
      &=\frac{4k^2-2k-2+(4k-1)n_H+n_H^2}{2k+n_H+1}
      -\frac{k^2-3k+(2k-2)n_H}{2k+n_H+1}\\
      &\leq \frac{[2(k-1)+n_H](2k+n_H+1)}{2k+n_H+1}=d_2(G).
 \end{align*}
  Notice that the second inequality holds, whenever $k>2$ or $n_H\neq 0$. However, if $k=2$ and $n_H=0$, it can be verified that $\overline d(G)\leq d_2(G)$ holds by inspection.
 \end{enumerate}
 We have shown that in all possible cases, $d_2(G)\geq\overline d(G)$. Hence, by virtue of Lemma~\ref{lemma:second largest eigenvalue},  we conclude that  $\mu_2(G)\geq\overline d(G)$ which means that $\sigma(G)\geq 2$.
 \end{proof}

\begin{theorem}\label{theorem:sigma1split}
     Conjecture~\ref{conjecture:1} holds for split graphs.
 \end{theorem}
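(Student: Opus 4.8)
The plan is to reduce the statement to connected and co-connected split graphs and then to settle that case via a degree-sequence argument. Since the class of split graphs is $\{2K_2,C_4,C_5\}$-free, it is hereditary and, in particular, closed by taking components; hence, by Theorem~\ref{theorem:+}, it suffices to prove Conjecture~\ref{conjecture:3} for split graphs, i.e.\ to show that a connected split graph with connected complement and $\sigma=1$ must be $K_1$ (the converse being immediate). So I will prove that every connected, co-connected split graph $G$ on $n\ge2$ vertices satisfies $\sigma(G)\ge2$, which by Lemma~\ref{lemma:second largest eigenvalue} follows once we establish $d_2(G)\ge\overline{d}(G)$.

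Fix a split partition $(C,S)$ of $G$ with $C$ a maximum clique, and write $p=|C|$, $q=|S|$, $n=p+q$, and $m=\binom{p}{2}+e$, where $e$ is the number of edges joining $C$ and $S$. A short check using connectivity and co-connectedness gives $p\ge2$ and $q\ge2$: if $p\le1$ then $G$ is $K_1$ or a star, and a star on at least two vertices is not co-connected, while $q\le1$ is excluded symmetrically. Connectivity gives $e\ge q$ (each vertex of $S$ has a neighbor in $C$); co-connectedness makes every vertex of $C$ miss some vertex of $S$, so each vertex of $C$ has at most $q-1$ neighbors in $S$; and since $C$ is a maximum clique, no vertex of $S$ is complete to $C$, so each vertex of $S$ has at most $p-1$ neighbors in $C$.

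The key point is that $d_2(G)$ is at least the second largest degree among the vertices of $C$, because $|C|\ge2$. Let $a\ge a'$ be the two largest values of the function ``number of neighbors in $S$'' over the vertices of $C$; then $d_2(G)\ge(p-1)+a'$, and, since the $e-a$ remaining cross-edges are spread over the other $p-1$ clique vertices and $a\le q-1$, we get $a'\ge\frac{e-q+1}{p-1}$. I would now split on the size of $e$. If $2e\le(p-1)q$ then $\overline{d}(G)=\frac{p(p-1)+2e}{p+q}\le p-1\le d_2(G)$, so we are done. If $2e>(p-1)q$, I would instead use $d_2(G)\ge(p-1)+\frac{e-q+1}{p-1}$: clearing denominators turns the target inequality $d_2(G)\ge\overline{d}(G)$ into an inequality that is linear in $e$, and evaluating it at the endpoint $e=(p-1)q/2$ shows it holds whenever $p\ge3$. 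The remaining case $p=2$, equivalently $\omega(G)=2$, is handled directly: a triangle-free connected and co-connected split graph is a double star with at least one leaf at each of its two centers (otherwise it is a star, hence not co-connected), so it has two vertices of degree $\ge2>\overline{d}(G)$, and again $d_2(G)>\overline{d}(G)$.

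Combining the cases yields $d_2(G)\ge\overline{d}(G)$, hence $\mu_2(G)\ge\overline{d}(G)$ and $\sigma(G)\ge2$; thus $K_1$ is the only connected, co-connected split graph with $\sigma=1$, and Theorem~\ref{theorem:+} completes the proof. The step I expect to be the real obstacle is the regime $2e>(p-1)q$ with $p\ge3$: there $\overline{d}(G)$ may exceed $p-1$, so the trivial bound $d_2(G)\ge p-1$ fails, and one must use that co-connectedness caps each clique vertex at $q-1$ neighbors in $S$ --- precisely the fact preventing the cross-edges from concentrating on a single vertex of $C$. Once the bound $a'\ge\frac{e-q+1}{p-1}$ is in hand, what remains is a routine computation rather than a conceptual difficulty.
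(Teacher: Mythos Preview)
Your overall strategy is sound and does close, but there is a real slip in the step you flag as the obstacle. After clearing denominators, the inequality $(p-1)+\dfrac{e-q+1}{p-1}\ge\dfrac{p(p-1)+2e}{p+q}$ becomes
\[
(p-1)^2q+(q-p+2)\,e-(q-1)(p+q)\ \ge\ 0,
\]
whose coefficient in $e$ is $q-p+2$. When $p>q+2$ this is negative, so evaluating only at the lower endpoint $e=(p-1)q/2$ does \emph{not} suffice; you must also check the upper endpoint $e\le p(q-1)$ forced by your co-connectedness bound $a\le q-1$. At $e=p(q-1)$ the left side equals $(p-1)\bigl(p+q(q-2)\bigr)>0$ for $p,q\ge2$, so the argument does complete---but as written that step fails in the regime $p>q+2$.

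The paper's proof is shorter and avoids this computation. It does not first restrict to co-connected graphs; it shows directly that every split graph not of the form $K_{1,r-1}+(n-r)K_1$ satisfies $d_2\ge\overline d$, by rewriting the target as $\sum_{i\ge3}(d_2-d_i)\ge d_1-d_2$. Taking $C$ to be a maximal clique of size $c$, each vertex of $S$ has degree at most $c-1$; a two-line split on whether $d_2\ge c$ or $d_2=c-1$ (the latter forcing $c\ge3$ and every $S$-vertex to have degree at most $1$) yields $\sum_{i\ge3}(d_2-d_i)\ge n-c\ge d_1-d_2$ in both cases. This sidesteps your estimate on $a'$ and the endpoint analysis entirely; what your route adds is an explicit structural explanation of why co-connectedness is the mechanism preventing the cross-edges from concentrating on a single clique vertex.
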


 \begin{proof}
 Let $(C,S)$ be a split partition of the graph on $n$ vertices $G$ such that $\vert C\vert =c$ and $\vert S\vert =n-c$. We label the vertices of $G$ so that   $C=\{v_1,\ldots,v_c\}$ and $S=\{v_{c+1},\ldots,v_n\}$.  We can assume, without loss of generality, that $C$ is a maximal clique of $G$ under inclusion and $d_i\ge d_{i+1}$, 
 for each   $ i \in \{1, \ldots,  n-1\}$. 
 
 We claim  that if $G$ is a split graph with $\sigma(G)=1$, then $G$ is isomorphic to $K_{1,r-1}+(n-r)K_1$ for some $r$ such that $2\leq r\leq n$.
 
 In order to prove our claim we assume  that $G$ is nonisomorphic to $K_{1,r-1}+(n-r)K_1$,  for each  $ r \in \{2, \ldots, n\}$ and we will 
 prove that $\sigma(G) \geq 2$. 
 By virtue of Lemma~\ref{lemma:second largest eigenvalue}, it suffices to prove that $d_2 \geq \d$ or equivalently that
 \[
  \sum_{i=3}^n(d_2-d_i)\geq d_1-d_2.
 \]
We will consider two cases. 
\begin{enumerate}
\item \emph{Assume that $d_2\ge c$}. Since $C$ is a maximal clique, $d_2-d_i\ge 1$ for each $i \in \{c+1, \ldots,  n\}$. Hence
 \[
  \sum_{i=3}^n(d_2-d_i)\ge \sum_{i=c+1}^n(d_2-d_i)\ge n-c\ge d_1-d_2.
 \]

\item \emph{Assume that $d_2=c-1$}. %
      Our assumption on $G$ implies that $c > 2$. Moreover, we have that  $d_i\le 1$ for each $i \in \{c+1, \ldots, n\}$. %
      Consequently, $d_2-d_i\ge 1$ for each such $i$ and the reasoning follows as above. 	
\end{enumerate}
 Thus we have proved our claim. 
 In particular, the only connected and co-connected split graph with $\sigma=1$ is $K_1$; i.e., Conjecture~\ref{conjecture:3} holds for split graphs. Therefore, by virtue of Theorem~\ref{theorem:+}, Conjecture~\ref{conjecture:1} holds for split graphs.
 \end{proof}
By combining Theorem~\ref{theorem:extendedp4laden}, Lemma~\ref{lemma:spider}, and Theorem~\ref{theorem:sigma1split}, we obtain the following result.

\begin{theorem}
 Conjecture~\ref{conjecture:1} holds for extended $P_4$-laden graphs.
\end{theorem}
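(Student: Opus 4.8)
The plan is to combine Theorem~\ref{theorem:+} with the structural classification of Theorem~\ref{theorem:extendedp4laden}. First I would observe that the class of extended $P_4$-laden graphs is hereditary: if every induced subgraph of $G$ on at most six vertices containing more than two induced $P_4$'s is pseudo-split, then the same holds for every induced subgraph of $G$, because an induced subgraph of an induced subgraph of $G$ is again an induced subgraph of $G$. In particular, since a connected component is an induced subgraph, the class of extended $P_4$-laden graphs is closed by taking components. By Theorem~\ref{theorem:+}, it therefore suffices to verify Conjecture~\ref{conjecture:3} for this class; that is, to show that the only connected and co-connected extended $P_4$-laden graph $G$ with $\sigma(G)=1$ is $K_1$.

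So let $G$ be a connected and co-connected extended $P_4$-laden graph with $G\neq K_1$; I would show $\sigma(G)\geq 2$ by a case analysis according to Theorem~\ref{theorem:extendedp4laden}. If $G$ is a spider or arises from one by adding a twin to a vertex of the body or the legs (assertion~\ref{it:p4laden2}), then Lemma~\ref{lemma:spider} gives $\sigma(G)\geq 2$ directly. If $G$ is a split graph (assertion~\ref{it:p4laden3}), then the proof of Theorem~\ref{theorem:sigma1split} already shows that the only connected and co-connected split graph with $\sigma=1$ is $K_1$, so again $\sigma(G)\geq 2$. It remains to treat assertion~\ref{it:p4laden1} with $G\neq K_1$, where $G\in\{P_5,\overline{P_5},C_5\}$; in each of these three graphs a direct computation gives $d_2(G)\geq\overline d(G)$ (one has $\overline d(P_5)=8/5<2=d_2(P_5)$, $\overline d(\overline{P_5})=12/5<3=d_2(\overline{P_5})$, and $\overline d(C_5)=2=d_2(C_5)$), so Lemma~\ref{lemma:second largest eigenvalue} yields $\mu_2(G)\geq d_2(G)\geq\overline d(G)$ and hence $\sigma(G)\geq 2$.

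Putting the cases together, no connected and co-connected extended $P_4$-laden graph other than $K_1$ has $\sigma=1$, so Conjecture~\ref{conjecture:3} holds for extended $P_4$-laden graphs, and Theorem~\ref{theorem:+} upgrades this to Conjecture~\ref{conjecture:1} for the whole class. I do not expect a genuine obstacle here: the argument is bookkeeping over a known classification, and the only mildly delicate points are confirming that the class is hereditary — hence closed by taking components, as required to apply Theorem~\ref{theorem:+} — and the finite verification for $P_5$, $\overline{P_5}$, and $C_5$, both of which are routine. If one preferred to avoid invoking the proof of Theorem~\ref{theorem:sigma1split} for the split case, one could simply re-run its short argument (showing $d_2\geq\overline d$ unless $G$ is a star plus isolated vertices, which is not connected and co-connected for $G\neq K_1$) in place.
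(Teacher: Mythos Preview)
Your proof is correct and follows essentially the same route as the paper: reduce to Conjecture~\ref{conjecture:3} via Theorem~\ref{theorem:+}, then use the trichotomy of Theorem~\ref{theorem:extendedp4laden} together with Lemma~\ref{lemma:spider} and Theorem~\ref{theorem:sigma1split}. The only differences are cosmetic: you explicitly justify that the class is hereditary (hence closed by taking components), a hypothesis of Theorem~\ref{theorem:+} that the paper leaves implicit, and you handle $P_5$, $\overline{P_5}$, $C_5$ via $d_2\geq\overline d$ and Lemma~\ref{lemma:second largest eigenvalue} rather than by directly asserting $\sigma=2$.
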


 \begin{proof}
 Let $G$ be a connected and co-connected graph extended $P_4$-laden graph with $\sigma(G)=1$.
 The proof of the theorem will follow by considering different cases depending on which of the assertions of Theorem~\ref{theorem:extendedp4laden} hold. Because of Lemma~\ref{lemma:spider}, $G$ does not satisfy assertion~\ref{it:p4laden2}. If $G$ satisfies assertion~\ref{it:p4laden1}, then $G$ is isomorphic to $K_1$ because $\sigma(C_5)=\sigma(P_5)=\sigma(\overline{P_5})=2$. If $G$ satisfies assertion~\ref{it:p4laden3}, then $G$ is isomorphic to $K_1$ because of Theorem~\ref{theorem:sigma1split}. We conclude that Conjecture~\ref{conjecture:3} holds. Therefore, Theorem~\ref{theorem:+} implies that Conjecture~\ref{conjecture:1} holds for all extended $P_4$-laden graphs $G$.
 \end{proof}

\section*{Acknowledgments}

A.\ Cafure and E.\ Dratman were partially supported by CONICET PIP 112-2013-010-0598. A.\ Cafure, E.\ Dratman, L.N.\ Grippo, and M.D.\ Safe were partially supported by CONICET UNGS-144-20140100027-CO. M.D.\ Safe was partially supported by PGI UNS 24/ZL16. V.\ Trevisan acknowledges the support of  CNPq - Grant 303334/2016-9.

\end{document}